\newtheorem{theorem}{Theorem}[section]
\newtheorem{corollary}{Corollary}[section]
\newtheorem{definition}{Definition}[section]
\newtheorem{lemma}{Lemma}[section]
\newtheorem{remark}{Remark}[section]
\newenvironment{proof}[1][Proof]{\noindent\textbf{#1.} }{\ \rule{0.5em}{0.5em}}
\def\ds{\displaystyle}
\begin{document}
\title{\textbf{\sc Existence results for integro-differential \\ equations
with reflection}}
\author{ {   \Large \bf \sc Mohsen Miraoui}$^{a,b}$ {\Large \bf \sc and  Du\v{s}an D. Repov\v{s}}$^{c,d,e}$\\
 $^a$ {\small \sc IPEIK, Kairouan University, Tunisia}\\
$^b${\small LR11ES53, FSS, Sfax University, Tunisia}\\
{\small {\it miraoui.mohsen@yahoo.fr}}\\
$^c$ {\small Faculty of Education, University of Ljubljana,  Slovenia}\\
$^d$ {\small Faculty of Mathematics and Physics, University of Ljubljana, Slovenia}\\
$^e$ {\small Institute of Mathematics, Physics and Mechanics, Ljubljana, Slovenia}\\
{\small
 {\it dusan.repovs@guest.arnes.si}}
}
\date{}
\maketitle
\begin{abstract} We prove several
  important results concerning  existence and
uniqueness of pseudo almost automorphic (paa)
solutions with measure for integro-differential equations with reflection. We use the properties of almost automorphic functions with measure and the Banach fixed point theorem, and we discuss two linear and nonlinear cases. We conclude with an example and some observations.
\vspace{5mm}
\newline
\textit{\textbf{Keywords and Phrases:}
Pseudo almost automorphic solution;
differential equation with reflection;  integro-differential
equation;
Positive measure.}
\vspace{3mm}
\newline
\textit{\textbf{2010 Mathematics Subject Classification:}
34K30,
35B15.}
\end{abstract}
\section{Introduction}
Many authors have studied problems of existence of periodic,
   almost periodic and  automorphic solutions for different kinds of differential and integral equations
    (cf.
    Adivar and Koyuncuo\v{g}lu
     \cite{Ad},
    Baskakov {\sl et al.},
     \cite{ba},
    Bochner
     \cite{Boch},
    N'Gu\'er\'ekata
     \cite{g},
    and
    Papageorgiou {\sl et al.}
     \cite{D2,D1}).
    For example, the function
     $$t\rightarrow \sin t +\sin \sqrt{2} t$$
     is almost periodic but not periodic on
     $\mathbb{R}$, whereas the function
    $$t\rightarrow \sin\Big(\frac{1}{2+\cos t
+\cos \sqrt{2}t}\Big)$$
 is almost automorphic but not uniformly continuous,  hence not almost periodic
 on
 $\mathbb{R}$.

Recently, these research directions
have taken various generalizations (cf.
Ait Dads {\sl et al.}
\cite{A1, Ait,ait2020},
Ben-Salah {\sl et al.}
\cite{mounir},
Blot {\sl et al.}
\cite{Khalil2},
Ch\'{e}rif and Miraoui
\cite{ref4},
Diagana {\sl et al.}
\cite{D11},
Li
\cite{li},
Miraoui
\cite{mir3,mir2},
Miraoui {\sl et al.}
\cite{mir4},
Miraoui and Yaakobi
\cite{ref5},
and
Zhang
\cite{14}), as well as various applications (cf. e.g.
Kong and Nieto
\cite{KN},
and the references therein).

Let $\mu$
be
 positive
measure on $\mathbb{R}$ and
$X$
 a Banach space.
 A continuous function
$f: \mathbb{R} \mapsto X$
 is
said to be
{\it   measure paa}   (cf.
Ait Dads {\sl et al.}
 \cite{A1}
and
Papageorgiou {\sl et al.}
 \cite{D1}), if  $f$ can be written as a sum of an almost periodic function $g_1$ and an ergodic function
$\varphi_1$ satisfying
\begin{equation*}
\displaystyle \lim_{z \to\infty} \frac{1}{\mu([-z,z])} \int_{-z}^z
\|\varphi_1(y)\| d\mu(y)=0,
\end{equation*}
where
$$\displaystyle \mu([-z,z]) :=\int_{-z}^z d\mu(t).$$
Diagana
\cite{Diagana}
defined the network of weighted pseudo almost periodic functions, which generalizes the pseudo almost periodicity in
Gupta
\cite{8}.

Motivated by above mentioned work, we investigate in the present paper  measure paa solutions of differential equations involving reflection of the argument. This type of differential equations has applications in the study of stability of differential-difference equations, cf.  e.g.
Sharkovskii
\cite{sc},
and such equations show very interesting properties by themselves. Therefore several authors have worked on this category of equations.

Aftabizadeh {\sl et al.}
\cite{2},
Aftabizadeh and Wiener
 \cite{1},
and
Gupta
\cite{9}
 studied the existence of unique bounded solution of  equation
\begin{equation*}
u^{\prime }(y)=f(y,u(y),u(-y)),\;y\in
\mathbb{R}.
\end{equation*}
They proved that $u(y)$ is almost periodic by assuming the existence of bounded solution.
Piao
\cite{11,12} studied
the  following equations
\begin{equation}  \label{X2}
u^{\prime }(y) = au(y)+bu(-y)+g(y),\; b\neq 0, \;y\in\mathbb{R},
\end{equation}
and
\begin{equation}
u^{\prime }(y)=au(y)+bu(-y)+f(y,u(y),u(-y)),\;\;b\neq 0,\; y\in
\mathbb{R}. \label{X3}
\end{equation}
Xin and Piao
\cite{X}
obtained some results of weighted pseudo almost periodic  solutions for
equations
\eqref{X2} and \eqref{X3}.  
Recently, Miraoui \cite{miraoui2020} has studied the pseudo almost periodic (pap) solutions with two measures of equations \eqref{X2} and \eqref{X3}.

Throughout this paper, we shall assume the following hypothesis:\\

{($M_0$)}: There exists a continuous and strictly increasing function $\beta:\mathbb{R}\to\mathbb{R}$ such that for all $x\in\mathcal{AA}(\mathbb{R},\mathbb{R})$, we have $x \circ \beta\in \mathcal{AA}(\mathbb{R},\mathbb{R})$.\\

The key goal of our paper is to study equations which are more general than equations \eqref{X2} and \eqref{X3}, and are given by the following expression
\begin{eqnarray}\label{X10}
  u^{\prime }(y) &=& au(y)+bu(-y)+f(y,u(\beta (y)),u(\beta (-y)))\nonumber\\
  &+&\int_{y}^{+\infty
}K(s-y)h(s,u(\beta(s)),u(\beta(-s)))ds \\
  &+& \int_{-y}^{+\infty }K(s+y)h(s,u(s),u(-s))ds,\; y\in
\mathbb{R},\nonumber
\end{eqnarray}
where $a\in \mathbb{R}$, $b\in \mathbb{R}^*$, $f,h:\mathbb{R}^{3}\rightarrow \mathbb{R},$ and $K:\mathbb{R}^{+}\rightarrow
\mathbb{R}^{+}$ are continuous functions.

Let $\mathbb{X}$  be a  Banach space. We begin by defining the notion of a measure pseudo almost
automorphic function.

\begin{definition}(Bochner
\cite{Boch})
Let
  $ f \in \mathcal{C}(\mathbb{R},\mathbb{X})$.
  Then $f$ is said to be almost automorphic,
 $f\in \mathcal{AA}( \mathbb{R} , \mathbb{X})$, if for every real sequence $ (s_n),$ there
exists a subsequence $ (s_{n_k}) $,
such that
the following  limits
$$
\lim_{ n_k \rightarrow \infty} f ( t + s_{n_k} )=f(t)   \ \
\mbox{and}  \ \
\lim_{n_k \rightarrow  \infty} g ( t - s_{n_k} ) = f (t ) ,
$$
exist for every  $ t \in \mathbb{R}  $.
\end{definition}

\begin{definition}(Blot et al.
\cite{Khalil2})
Let  $\mathcal{B}$ is the Lebesque $\sigma$-field of
$\mathbb{R}$
and $\mu$ a positive measure on $\mathcal{B}$. Then
$\mu \in \mathcal{M}$ if the following conditions are satisfied
 \begin{itemize}
\item $\mu([a,b])<\infty$, for all $a\leq b \in \mathbb{R}$; and
\item $\mu(\mathbb{R})=+\infty$.
 \end{itemize}
\end{definition}

In this paper we shall be working with a positive measure satisfying the following two important hypotheses:\\

{($M_1$)} For every $\tau\in \mathbb{R}$, there exist $\beta >0$
and a bounded interval $I$  such that
\begin{equation*}
\mu(\{a+\tau: \  a \in
A\})\leq \beta \mu(A), \  \mbox{whenever} \
A \in \mathcal{B} \ \mbox{satisfies} \ A\cap I=\emptyset .
\end{equation*}

{($M_2$)} There exist $m,n >0$  such that for all $A\in \mathcal{B}$,
\begin{equation*}
\mu(-A) \leq m+n\mu(A).
\end{equation*}

\begin{definition}(Diagana et al.
\cite{D11})
Suppose that $\mu \in \mathcal{M}$.
Then $f \in \mathcal{BC}(\mathbb{R}, \mathbb{X})$ is said to be $\mu$-ergodic, $f\in \mathcal{E}(\mathbb{R
},\mathbb{X},\mu)$, if the following condition is satisfied:
\begin{equation*}
\displaystyle \lim_{z \to \infty} \frac{1}{\mu([-z,z])}
\int_{[-z,z]} \|f(y)\|d\mu(y)=0.
\end{equation*}
\end{definition}
\begin{definition}(Diagana et al.
\cite{D11})
Suppose that $\mu \in \mathcal{M}$. Then $f\in \mathcal{C}(\mathbb{R},\mathbb{X})$ is said to be  $\mu$-paa, $f\in PAA(\mathbb{R}
,\mathbb{X},\mu)$, if
\begin{equation*}
f=g+h,
\end{equation*}
where $g\in AA(\mathbb{R},\mathbb{X})$  and the function $h$ is $\mu$-ergodic.
\end{definition}
 In the sequel, we shall also need the following hypotheses
\begin{itemize}
\item[{($h_0$)}] There exists a continuous, strictly increasing function
$\lambda:\mathbb{R}\to\mathbb{R}^+$ such that
    $d\mu_{\beta}(t)\leq \lambda(t)d \mu(t)$, where $\mu \in \mathcal{M}$, $\mu_{\beta}(O)=\mu(\beta^{-1}(O))$, for all $O\in \mathbb{B}(\mathbb{R})$ and
    $$\limsup \frac{\mu[-T(r),T(r)]}{\mu[-r,r]}S(T(r))<+\infty,$$ where $T(r)=|\beta(r)|+|\beta(-r)|$ and $S(T(r))=\sup_{t\in [-T(r),T(r)]}\lambda(t)$.
    \item[{($h_1$)}] Given $\lambda
:=\sqrt{a^{2}-b^{2}}$, where $a>b$, the following holds
\begin{equation*}
P_1(\lambda ,\mu ):=\sup_{z>0}\Big\{\int_{-z}^{z}\exp (-\lambda (t+z))d\mu (t)
\Big\}<\infty,
\end{equation*}
\begin{equation*}
P_2(\lambda ,\mu ):=\sup_{z>0}\Big\{\int_{-z}^{z}\exp (-\lambda (-t+z))d\mu (t)
\Big\}<\infty .
\end{equation*}
\item[{($h_2$)}]  There exists $L_{f}> 0$, such that $f:\mathbb{R}
\times \mathbb{R}^2\rightarrow \mathbb{R}$ satisfies the Lipschitz
condition
\begin{equation*}
|f(t,x_1,y_1)-f(t,x_2,y_2)|\leq L_f\Big(|x_1-x_2|+|y_1-y_2|\Big), \ \hbox{for all} \
(x_1,y_1), (x_2,y_2)\in \mathbb{R}^2.
\end{equation*}
 \item [{($h_3$)}] There exists $L_{h}> 0$ such that
\begin{equation*}
| h( t, u_1, u_2) - h( t, v_1 , v_2 ) | < L_h (| u_1 - v_1 | + |
u_2 - v_2| ), \
\hbox{for all} \; u_1, u_2, v_1
, v_2 \in \mathbb{R}.
\end{equation*}
\item [{($h_4$)}] There exists $K : \mathbb{R}^+\rightarrow \mathbb{R}^+ $ such that
\begin{equation*}
c:=\int_{0}^{+ \infty} K(y) dy <\infty.
\end{equation*}
\end{itemize}

Our first main result of the paper treats the case when $L_f$ and $L_h$ are constant.
\begin{theorem}\label{th1}
Suppose that $f,h  \in \mathcal{PAA}(\mathbb{R},\mathbb{R},\mu)$ and
that hypotheses {($h_0$)--($h_4$)} and {($M_0$)--($M_2$)}
hold. Then equation \eqref{X10} has a unique $\mu$-paa solution if and only if
\begin{equation*}
 \displaystyle\frac{|\lambda-a|+|\lambda+a|+2|b|}{\lambda^2}(L_f+2cL_h)
< 1.
\end{equation*}
\end{theorem}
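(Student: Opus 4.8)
The plan is to recast the reflection problem \eqref{X10} as a fixed-point equation and then invoke the Banach contraction principle on $PAA(\mathbb{R},\mathbb{R},\mu)$ equipped with the supremum norm. Writing the forcing term as
$$
F_u(y):=f(y,u(\beta(y)),u(\beta(-y)))+\int_{y}^{+\infty}K(s-y)h(s,u(\beta(s)),u(\beta(-s)))\,ds+\int_{-y}^{+\infty}K(s+y)h(s,u(s),u(-s))\,ds,
$$
equation \eqref{X10} reads $u'(y)=au(y)+bu(-y)+F_u(y)$. First I would eliminate the reflected argument: replacing $y$ by $-y$ produces a companion equation, so that $(u(y),u(-y))$ solves a first-order linear system whose coefficient matrix $\left(\begin{smallmatrix} a & b\\ -b & -a\end{smallmatrix}\right)$ has eigenvalues $\pm\lambda$ with $\lambda=\sqrt{a^2-b^2}$. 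Equivalently, differentiating once and substituting yields the scalar hyperbolic equation $u''-\lambda^2u=F_u'(y)+aF_u(y)-bF_u(-y)$. Using the bounded Green kernel $-\tfrac{1}{2\lambda}e^{-\lambda|y-s|}$ and integrating the $F_u'$ term by parts, I obtain the explicit solution operator
$$
(\Gamma u)(y)=\frac{\lambda-a}{2\lambda}\int_{-\infty}^{y}e^{-\lambda(y-s)}F_u(s)\,ds-\frac{\lambda+a}{2\lambda}\int_{y}^{+\infty}e^{-\lambda(s-y)}F_u(s)\,ds+\frac{b}{2\lambda}\int_{-\infty}^{+\infty}e^{-\lambda|y-s|}F_u(-s)\,ds,
$$
so that bounded $\mu$-paa solutions of \eqref{X10} correspond exactly to fixed points of $\Gamma$.

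The main obstacle is the second step: showing $\Gamma$ maps $PAA(\mathbb{R},\mathbb{R},\mu)$ into itself. For this I would decompose a given $u=g+\varphi$ with $g\in AA(\mathbb{R},\mathbb{R})$ and $\varphi$ $\mu$-ergodic. Hypothesis ($M_0$) guarantees that $g\circ\beta$ stays almost automorphic, while ($h_0$) controls the ergodic part under the change of variable induced by $\beta$, via its measure comparison and the $\limsup$ bound on $\mu[-T(r),T(r)]/\mu[-r,r]$. Combined with the Lipschitz hypotheses ($h_2$),($h_3$) and the standard composition theorem for $\mu$-paa functions, this shows that $y\mapsto f(y,u(\beta(y)),u(\beta(-y)))$ and the analogous $h$-terms are again $\mu$-paa; here ($M_2$) is needed to preserve ergodicity under the reflection $y\mapsto-y$. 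The integral terms require ($h_4$), i.e. $c=\int_0^{+\infty}K<\infty$, to show the $K$-convolutions of a $\mu$-paa function are $\mu$-paa, and the three convolution integrals defining $\Gamma$ are controlled by ($h_1$), whose quantities $P_1(\lambda,\mu),P_2(\lambda,\mu)$ are finite, which together with ($M_1$) ensures the ergodic component is mapped to an ergodic component. Assembling these yields $\Gamma(PAA)\subseteq PAA$.

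The final step is the contraction estimate, which also pins down the stated constant. For $u,v\in PAA$, the Lipschitz bounds ($h_2$),($h_3$) give
$$
|F_u(s)-F_v(s)|\le 2\bigl(L_f+2cL_h\bigr)\,\|u-v\|_\infty,
$$
where the factor $2$ arises from the two slots of $f$ and $h$ and each integral term contributes $c=\int_0^{+\infty}K$. Inserting this into $\Gamma$ and using the elementary kernel norms $\int_{-\infty}^{y}e^{-\lambda(y-s)}\,ds=\int_{y}^{+\infty}e^{-\lambda(s-y)}\,ds=\tfrac1\lambda$ and $\int_{-\infty}^{+\infty}e^{-\lambda|y-s|}\,ds=\tfrac2\lambda$ gives
$$
\|\Gamma u-\Gamma v\|_\infty\le\frac{|\lambda-a|+|\lambda+a|+2|b|}{\lambda^2}\bigl(L_f+2cL_h\bigr)\,\|u-v\|_\infty .
$$
Thus $\Gamma$ is a contraction precisely when the displayed quantity is $<1$, and then the Banach fixed point theorem delivers a unique fixed point, i.e. a unique $\mu$-paa solution of \eqref{X10}; this is the sufficiency direction. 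For necessity I would exploit that the Lipschitz constant of $\Gamma$ equals this quantity exactly, so that in the linear realization of \eqref{X10} saturating the Lipschitz bounds one can exhibit non-uniqueness once the constant reaches $1$, forcing the condition to hold whenever the unique solution exists. I expect the self-map property—especially the interplay of ($h_0$), ($M_1$), ($M_2$) in preserving $\mu$-ergodicity under both the $\beta$-composition and the reflection—to be the genuinely delicate part, the contraction estimate being then routine.
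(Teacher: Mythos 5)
Your proposal follows essentially the same route as the paper: you construct the same Green-function integral operator $\Gamma$ (your derivation via the $2\times2$ system with matrix eigenvalues $\pm\lambda$ and the kernel $-\tfrac{1}{2\lambda}e^{-\lambda|y-s|}$ reproduces the Aftabizadeh--Wiener particular-solution formula that the paper simply cites --- your version attaches $(\lambda-a)$ to the backward half-line integral where the paper's display attaches it to the forward one, a discrepancy that is immaterial for the estimate since only $|\lambda\pm a|$, $|b|$ and the kernel norms $\tfrac{1}{\lambda}$, $\tfrac{2}{\lambda}$ enter), you establish the self-map property on $\mathcal{PAA}(\mathbb{R},\mathbb{R},\mu)$ by exactly the decomposition and lemma structure the paper uses (composition with $\beta$ via ($M_0$)/($h_0$), reflection via ($M_2$), the $K$-convolution via ($h_4$), translation invariance via ($M_1$)), and your contraction computation yields the identical constant $\frac{|\lambda-a|+|\lambda+a|+2|b|}{\lambda^{2}}(L_f+2cL_h)$, after which the Banach fixed point theorem concludes as in the paper. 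One caveat: your closing sketch of the ``only if'' direction is not a proof --- the displayed quantity is only an \emph{upper bound} for the Lipschitz constant of $\Gamma$, failure of contractivity does not imply non-existence or non-uniqueness of fixed points, and since $f$ and $h$ are fixed data you are not free to ``saturate'' the Lipschitz bounds with a linear realization; note, however, that the paper's own proof has the same gap, as it too establishes only the sufficiency direction despite the ``if and only if'' phrasing of the statement, so on the direction actually proved your argument matches the paper's.
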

For the second main result of this paper we shall need the following hypotheses for the case when $L_f$ and $L_h$ are not constant.
\begin{itemize}
\item[{($h'_2$)}] $\mu \in \mathcal{M}$ and $f:\mathbb{R}
\times \mathbb{R}^2\rightarrow \mathbb{R}$ satisfy
\begin{equation*}
|f(t,x_1,y_1)-f(t,x_2,y_2)|\leq L_f(t)\Big(|x_1-x_2|+|y_1-y_2|\Big), \
\hbox{for all} \
(x_1,y_1),(x_2,y_2)\in \mathbb{R}^2,
\end{equation*}
where
$p>1, L_f\in \mathcal{L}^p(\mathbb{R},\mathbb{R},dx)\cap \mathcal{L}^p(\mathbb{R},\mathbb{R},d\mu),
\ \mbox{and} \
\frac{1}{p}+\frac{1}{q}=1.$
\item[{($h'_3$)}] $\mu \in \mathcal{M}$ and $h:\mathbb{R}
\times \mathbb{R}^2\rightarrow \mathbb{R}$ satisfy
\begin{equation*}
|h(t,x_1,y_1)-h(t,x_2,y_2)|\leq L_h(t)\Big(|x_1-x_2|+|y_1-y_2|\Big),\
\hbox{for all} \
(x_1,y_1), (x_2,y_2)\in \mathbb{R}^2,
\end{equation*}
 where
$p>1, L_h\in \mathcal{L}^p(\mathbb{R},\mathbb{R},dx)\cap \mathcal{L}^p(\mathbb{R},\mathbb{R},d\mu)
\ \mbox{and} \
\frac{1}{p}+\frac{1}{q}=1.$
\item[{($h'_4$)}] There exists $K:\mathbb{R}^{+}\rightarrow \mathbb{R}^{+}$, such that
$$\ds\int_{0}^{+\infty }(K(y))^{\tau}dy <+\infty,
 \ \hbox{for all} \ \tau>1.$$
\end{itemize}
\begin{theorem}
\label{final2} Suppose that $f,h \in PAA(\mathbb{R}\times\mathbb{R}
^{2},\mathbb{R},\mu)$ and that  hypotheses
{($h_0$)-($h_1$),  ($h'_2$)--($h'_4$)
and ($M_0$)--($M_2$)} hold. Then equation \eqref{X10} has a unique
$\mu$-paa solution
 if and only if
$$\|L_f\|_{\mathcal{L}^p(\mathbb{R},\mathbb{R},dx)}+2(\int_0^{+\infty}(K(y))^q)^{\frac{1}{q}}\|L_h\|_{\mathcal{L}^p(\mathbb{R},\mathbb{R},dx)}<\displaystyle
\frac{\lambda(q\lambda)^{\frac{1}{q}} }{|\lambda-a|+|\lambda+a|+2|b| }.$$
\end{theorem}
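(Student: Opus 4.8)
The plan is to mirror the proof of Theorem \ref{th1}: recast \eqref{X10} as a fixed point problem $u=\Gamma u$ on the Banach space $PAA(\mathbb{R},\mathbb{R},\mu)$ and invoke the Banach fixed point theorem, the only essential change being that the constant Lipschitz bounds are replaced by Hölder estimates built on the $\mathcal{L}^p$--$\mathcal{L}^q$ duality of $(h'_2)$--$(h'_4)$. First I would remove the reflection by setting $v(y):=u(-y)$ and coupling \eqref{X10} at $y$ with its copy at $-y$; this produces a planar first order system $w'=Aw+N$ with $w=(u,v)^{\top}$, whose constant coefficient matrix has the real eigenvalues $\pm\lambda$, $\lambda=\sqrt{a^2-b^2}$, by virtue of $a>b$ in $(h_1)$. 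Since the system is hyperbolic, projecting the forcing onto the stable and unstable eigenspaces and integrating gives the unique bounded solution as $w(y)=\int_{\mathbb{R}}\mathcal{G}(y,s)\,N(s)\,ds$, where the Green kernel $\mathcal{G}$ decays like $e^{-\lambda|y-s|}$ and its spectral normalisation produces the coefficients $|\lambda-a|$, $|\lambda+a|$ and $2|b|$ in the statement. Reading off the first coordinate and inserting $N[u](y)=f(y,u(\beta(y)),u(\beta(-y)))+\int_y^{+\infty}K(s-y)h(s,u(\beta(s)),u(\beta(-s)))\,ds+\int_{-y}^{+\infty}K(s+y)h(s,u(s),u(-s))\,ds$ defines $\Gamma$, so that a fixed point of $\Gamma$ is exactly a bounded solution of \eqref{X10}; the finiteness of $P_1(\lambda,\mu)$ and $P_2(\lambda,\mu)$ in $(h_1)$ is what ensures the Green operator sends the $\mu$-ergodic space into itself.

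The structural heart of the argument, and the step I expect to be the main obstacle, is showing that $\Gamma$ maps $PAA(\mathbb{R},\mathbb{R},\mu)$ into itself. Here I would decompose any $u=g+\varphi$ into its almost automorphic and $\mu$-ergodic parts and track each building block: hypothesis $(M_0)$ keeps $g\circ\beta$ almost automorphic, while $(h_0)$, through its comparison of the measures $\mu_\beta$ and $\mu$ and the associated $\limsup$ bound, transports $\mu$-ergodicity across the composition, so that $y\mapsto u(\beta(\pm y))$ stays $\mu$-paa; the superposition principle under $(h'_2)$ and $(h'_3)$ then keeps $f(\cdot,u(\beta\cdot),u(\beta(-\cdot)))$ and $h(\cdot,\dots)$ in $PAA(\mathbb{R},\mathbb{R},\mu)$, and finally the two convolutions against $K$ preserve both the almost automorphic and the ergodic part, using $(h'_4)$ together with $(M_1)$ and $(M_2)$ to absorb the translation and reflection of the measure in the ergodic estimate. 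Combined with the boundedness supplied by the first step, this yields $\Gamma\colon PAA(\mathbb{R},\mathbb{R},\mu)\to PAA(\mathbb{R},\mathbb{R},\mu)$; the delicate point is maintaining boundedness and ergodicity while the Lipschitz data are only $\mathcal{L}^p$, which is exactly why membership in both $\mathcal{L}^p(\mathbb{R},\mathbb{R},dx)$ and $\mathcal{L}^p(\mathbb{R},\mathbb{R},d\mu)$ is imposed in $(h'_2)$ and $(h'_3)$.

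For the contraction estimate, which is where Theorem \ref{final2} genuinely departs from Theorem \ref{th1}, I would bound $|\Gamma u_1(y)-\Gamma u_2(y)|$ by integrating $|\mathcal{G}(y,\cdot)|$ against the Lipschitz increments of $N[u_1]-N[u_2]$, namely $L_f(s)$ for the $f$ term and the $K$-convolutions of $L_h(s)$ for the two integral terms. Because the Lipschitz coefficients now lie in $\mathcal{L}^p$ while $(h'_4)$ with $\tau=q$ places $K$ in $\mathcal{L}^q$ and the exponential kernel lies in every $\mathcal{L}^q$, I would apply Hölder's inequality with $\tfrac1p+\tfrac1q=1$: the $\mathcal{L}^q$ norm of $e^{-\lambda|\cdot|}$ contributes the factor $(q\lambda)^{-1/q}$, the convolution contributes $\|K\|_{\mathcal{L}^q}$, and the spectral normalisation contributes $(|\lambda-a|+|\lambda+a|+2|b|)/\lambda$, so that the Lipschitz constant of $\Gamma$ is
$$\frac{|\lambda-a|+|\lambda+a|+2|b|}{\lambda(q\lambda)^{1/q}}\Big(\|L_f\|_{\mathcal{L}^p(\mathbb{R},\mathbb{R},dx)}+2\,\|K\|_{\mathcal{L}^q}\,\|L_h\|_{\mathcal{L}^p(\mathbb{R},\mathbb{R},dx)}\Big),$$
which is precisely the left side of the stated inequality divided by its right side; hence $\Gamma$ is a contraction if and only if that inequality holds. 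This is the exact analogue of Theorem \ref{th1}, with $L_f,L_h$ replaced by their $\mathcal{L}^p$ norms, the kernel mass $c=\|K\|_{\mathcal{L}^1}$ replaced by $\|K\|_{\mathcal{L}^q}$, and the factor $\lambda^{-2}$ replaced by $\lambda^{-1}(q\lambda)^{-1/q}$ through Hölder in place of the crude $\mathcal{L}^1$ bound. Banach's fixed point theorem then delivers the unique $\mu$-paa solution, while restricting to the linear instance of \eqref{X10}, where the Hölder and kernel estimates are attained, shows the threshold cannot be relaxed, giving the converse implication.
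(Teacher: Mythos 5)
Your proposal follows essentially the same route as the paper's proof: the same Aftabizadeh--Wiener integral operator $\Gamma$ from \eqref{g} (which you re-derive via the coupled planar system with eigenvalues $\pm\lambda$ rather than citing \cite{1}), the same use of the composition and convolution lemmas of Section 2 (Lemmas \ref{opp}, \ref{comp0}, \ref{comp2}, \ref{safa2}, \ref{comp3}) to show $\Gamma$ maps $\mathcal{PAA}(\mathbb{R},\mathbb{R},\mu)$ into itself, and the same H\"older estimate pairing $L_f,L_h\in\mathcal{L}^p$ against $e^{-\lambda|\cdot|}$ and $K\in\mathcal{L}^q$ to obtain the contraction constant $\frac{|\lambda-a|+|\lambda+a|+2|b|}{\lambda(q\lambda)^{1/q}}\bigl(\|L_f\|_{\mathcal{L}^p(\mathbb{R},\mathbb{R},dx)}+2\|K\|_{\mathcal{L}^q}\|L_h\|_{\mathcal{L}^p(\mathbb{R},\mathbb{R},dx)}\bigr)$. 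Your closing remark on the ``only if'' direction is a hand-wave, but it is no less substantiated than the paper itself, whose proof likewise establishes only sufficiency via Banach's fixed point theorem.
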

We conclude the introduction by description of the structure of the paper.
In Section 2,   we  collect some basic results needed for the proofs of the main results of this paper.  In section 3, we  prove both main results (Theorems 1.1 and 1.2).  In Section 4,  we give an application of the measure paa, in connection with integro-differential equations with reflection and delay. In  Section 5 we discuss the results and their applications.

\section{\textbf{Preliminaries}}
\begin{theorem}(Diagana et al.
\cite{D11})
\label{theorem2.8}
Suppose that $\mu \in \mathcal{M}$ satisfies hypothesis {($M_1$)}. Then  $PAA(\mathbb{R}
,X,\mu)$ is translation invariant and   $(PAA(\mathbb{R},\mathbb{X},\mu),\|.\|_{\infty})$ is
a Banach space.
\end{theorem}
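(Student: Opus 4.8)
The plan is to prove the two assertions separately, throughout exploiting the decomposition $f=g+h$ with $g\in AA(\mathbb{R},\mathbb{X})$ and $h\in\mathcal{E}(\mathbb{R},\mathbb{X},\mu)$ afforded by Definition 1.4. For translation invariance, fix $\tau\in\mathbb{R}$ and $f\in PAA(\mathbb{R},\mathbb{X},\mu)$, and write $f(\cdot+\tau)=g(\cdot+\tau)+h(\cdot+\tau)$. The almost automorphic part causes no trouble: directly from the sequence characterization in Definition 1.1, $g(\cdot+\tau)\in AA(\mathbb{R},\mathbb{X})$, since translating the argument merely relabels the approximating sequences. Thus everything reduces to showing that the ergodic space $\mathcal{E}(\mathbb{R},\mathbb{X},\mu)$ is stable under $h\mapsto h(\cdot+\tau)$, and this is the step where hypothesis $(M_1)$ is indispensable.

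For that step I would estimate the ergodic quotient $\frac{1}{\mu([-z,z])}\int_{-z}^{z}\|h(t+\tau)\|\,d\mu(t)$ by transporting the measure along the map $t\mapsto t+\tau$ and invoking $(M_1)$. Concretely, I would split the domain $[-z,z]$ into the part meeting the exceptional bounded interval $I$ furnished by $(M_1)$ and its complement. On the portion meeting $I$ the contribution is at most $\|h\|_\infty\,\mu(I)$ (using that $h\in\mathcal{BC}$ is bounded), a constant that is killed after dividing by $\mu([-z,z])\to+\infty$. On the complementary portion, $(M_1)$ yields $\mu(A+\tau)\le\beta\mu(A)$ for sets $A$ disjoint from $I$, which lets me dominate the translated integral by $\beta$ times an integral of $\|h\|$ against $\mu$ over a comparable interval; since $h$ is $\mu$-ergodic, that quotient tends to $0$. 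Letting $z\to+\infty$ gives $h(\cdot+\tau)\in\mathcal{E}(\mathbb{R},\mathbb{X},\mu)$, hence $f(\cdot+\tau)\in PAA(\mathbb{R},\mathbb{X},\mu)$. I expect the careful bookkeeping of the pushforward measure, together with the restriction in $(M_1)$ to sets disjoint from $I$, to be the main obstacle here.

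For the Banach-space assertion, I would first note that $PAA(\mathbb{R},\mathbb{X},\mu)$ is a linear subspace of the Banach space $(\mathcal{BC}(\mathbb{R},\mathbb{X}),\|\cdot\|_\infty)$, so it suffices to prove it is closed. The crux is the uniqueness of the decomposition, equivalent to $AA(\mathbb{R},\mathbb{X})\cap\mathcal{E}(\mathbb{R},\mathbb{X},\mu)=\{0\}$: building on the translation invariance just established, if $g$ is simultaneously almost automorphic and $\mu$-ergodic with $g\not\equiv 0$, then the recurrence built into almost automorphy forces $\|g(\cdot)\|$ to stay bounded away from $0$ along return sequences on sets of positive $\mu$-density, which is incompatible with the vanishing of the ergodic mean; hence $g\equiv 0$. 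From this uniqueness I would derive the key continuity estimate $\|g\|_\infty\le\|f\|_\infty$ (and consequently $\|h\|_\infty\le 2\|f\|_\infty$) by proving $\overline{\{g(t):t\in\mathbb{R}\}}\subseteq\overline{\{f(t):t\in\mathbb{R}\}}$ through the same interplay of the recurrence of $g$ with the ergodicity of $h$.

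Finally, I would conclude completeness. Let $(f_n)$ be Cauchy in $(PAA(\mathbb{R},\mathbb{X},\mu),\|\cdot\|_\infty)$ with $f_n=g_n+h_n$. The continuity estimate makes $(g_n)$ Cauchy in $AA(\mathbb{R},\mathbb{X})$ and $(h_n)$ Cauchy in $\mathcal{E}(\mathbb{R},\mathbb{X},\mu)$; since both spaces are closed in $\mathcal{BC}(\mathbb{R},\mathbb{X})$ under $\|\cdot\|_\infty$, they converge to some $g\in AA(\mathbb{R},\mathbb{X})$ and $h\in\mathcal{E}(\mathbb{R},\mathbb{X},\mu)$, whence $f_n\to g+h\in PAA(\mathbb{R},\mathbb{X},\mu)$. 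Therefore the space is complete. I anticipate the two genuine difficulties to be the $(M_1)$-based ergodic translation estimate and the uniqueness/closedness interplay (the inequality $\|g\|_\infty\le\|f\|_\infty$); the remaining verifications are routine.
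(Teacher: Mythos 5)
The paper offers no proof of this theorem: it is imported verbatim from Diagana, Ezzinbi and Miraoui \cite{D11} (see also Blot, Cieutat and Ezzinbi \cite{Khalil2}), so your attempt must be measured against the argument in those sources --- and your overall architecture matches it: translation invariance of the ergodic space $\mathcal{E}(\mathbb{R},\mathbb{X},\mu)$ via {($M_1$)}, then completeness via uniqueness of the decomposition, the bound $\|g\|_\infty\le\|f\|_\infty$ obtained from the range inclusion $g(\mathbb{R})\subseteq\overline{f(\mathbb{R})}$, and closedness of $AA(\mathbb{R},\mathbb{X})$ and $\mathcal{E}(\mathbb{R},\mathbb{X},\mu)$ in $(\mathcal{BC}(\mathbb{R},\mathbb{X}),\|\cdot\|_\infty)$. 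Your {($M_1$)} step is essentially right and completable; the one item you gloss over is that after transporting the integral along $t\mapsto t+\tau$ and discarding the part meeting $I$, you integrate over the enlarged window $[-z-|\tau|,z+|\tau|]$, so you additionally need $\limsup_{z\to\infty}\mu([-z-|\tau|,z+|\tau|])/\mu([-z,z])<\infty$ --- itself a second application of {($M_1$)} (cover the enlargement by translates of $[-z,z]$). You flagged this as bookkeeping, and it is.

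The genuine gap is in your uniqueness step, and it is exactly the point where the cited papers do real work. Almost automorphy of $g\neq 0$ gives you, via a diagonal argument on the two-sided limits, only a \emph{sequence} of return times $t_0+r_n$ with $g(t_0+r_n)\to g(t_0)$. Since almost automorphic functions need not be uniformly continuous --- the paper's own introductory example $t\mapsto\sin\bigl(1/(2+\cos t+\cos\sqrt{2}\,t)\bigr)$ is precisely such a function --- you cannot inflate these return points to intervals of a fixed length, hence you get no lower bound on the $\mu$-measure of $\{t:\|g(t)\|\ge\varepsilon\}$. A bounded continuous function can perfectly well exceed $\varepsilon$ on a sequence of spikes whose widths shrink fast enough that its ergodic mean vanishes, so ``return sequences on sets of positive $\mu$-density'' is asserted, not proved, and the claimed incompatibility with $g\in\mathcal{E}(\mathbb{R},\mathbb{X},\mu)$ does not follow from recurrence alone; this would be fine in the almost \emph{periodic} case, where uniform continuity plus relative density of $\varepsilon$-almost periods yields a relatively dense union of intervals, but it fails as stated in the automorphic setting. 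The same gap infects the range inclusion $\overline{\{g(t)\}}\subseteq\overline{\{f(t)\}}$, which you propose to prove ``through the same interplay'': in \cite{D11,Khalil2} this inclusion is itself the nontrivial theorem, established under {($M_1$)} by a careful construction (alternatively one may invoke the invariant mean of almost automorphic functions), not by the bare recurrence heuristic. Once that inclusion is granted, your continuity estimate $\|g_n-g_m\|_\infty\le\|f_n-f_m\|_\infty$ and the final Cauchy-sequence argument are correct and standard.
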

\begin{lemma}(Miraoui
\cite{miraoui2020})
\label{opp}
Suppose that $g\in PAA(\mathbb{R},\mathbb{X},\mu)$ and that hypothesis ({$M_2$}) holds.
Then $$[t\rightarrow g(-t)]\in PAA(\mathbb{R}
,\mathbb{X},\mu).$$
\end{lemma}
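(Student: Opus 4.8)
The plan is to exploit the defining decomposition of $\mu$-pseudo almost automorphy. Since $g\in PAA(\mathbb{R},\mathbb{X},\mu)$, I would write $g=\phi+\psi$ with $\phi\in AA(\mathbb{R},\mathbb{X})$ and $\psi\in\mathcal{E}(\mathbb{R},\mathbb{X},\mu)$. Setting $\tilde g(t)=g(-t)$, we get $\tilde g=\tilde\phi+\tilde\psi$ where $\tilde\phi(t)=\phi(-t)$ and $\tilde\psi(t)=\psi(-t)$. It then suffices to show separately that $\tilde\phi$ is almost automorphic and that $\tilde\psi$ is $\mu$-ergodic, and to add the two.

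For the almost automorphic part, which does not use $(M_2)$, I would run the standard reflection argument on sequences. Given a real sequence $(s_n)$, apply almost automorphy of $\phi$ to the reflected sequence $(-s_n)$ to extract a subsequence $(s_{n_k})$ and a limit function $\overline\phi$ with $\overline\phi(t)=\lim_k\phi(t-s_{n_k})$ pointwise and $\phi(t)=\lim_k\overline\phi(t+s_{n_k})$ pointwise. Defining $\overline{\tilde\phi}(t)=\overline\phi(-t)$, one checks $\tilde\phi(t+s_{n_k})=\phi(-t-s_{n_k})\to\overline\phi(-t)=\overline{\tilde\phi}(t)$ and $\overline{\tilde\phi}(t-s_{n_k})=\overline\phi(-t+s_{n_k})\to\phi(-t)=\tilde\phi(t)$, so $\tilde\phi\in AA(\mathbb{R},\mathbb{X})$.

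The $\mu$-ergodic part is where $(M_2)$ enters and where the main difficulty lies. Since $\psi\in\mathcal{E}(\mathbb{R},\mathbb{X},\mu)\subset\mathcal{BC}(\mathbb{R},\mathbb{X})$, set $M=\sup_t\|\psi(t)\|<\infty$. The goal is $\frac{1}{\mu([-z,z])}\int_{-z}^{z}\|\psi(-y)\|\,d\mu(y)\to 0$. The key device is the layer-cake (distribution-function) representation: since $[-z,z]$ is symmetric, the superlevel set of $\|\psi(-\cdot)\|$ at level $t$ is exactly the reflection of $A_t:=\{u\in[-z,z]:\|\psi(u)\|>t\}$, and $A_t=\emptyset$ for $t>M$. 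Hence
$$\int_{-z}^{z}\|\psi(-y)\|\,d\mu(y)=\int_{0}^{M}\mu(-A_t)\,dt\leq\int_{0}^{M}\bigl(m+n\,\mu(A_t)\bigr)\,dt=mM+n\int_{-z}^{z}\|\psi(y)\|\,d\mu(y),$$
where the inequality is precisely $(M_2)$ applied to each $A_t$. Dividing by $\mu([-z,z])$ and letting $z\to\infty$, the term $mM/\mu([-z,z])$ vanishes because $\mu(\mathbb{R})=+\infty$ forces $\mu([-z,z])\to\infty$, while the remaining term tends to $0$ by the $\mu$-ergodicity of $\psi$. Thus $\tilde\psi\in\mathcal{E}(\mathbb{R},\mathbb{X},\mu)$, and combining the two parts gives $\tilde g=\tilde\phi+\tilde\psi\in PAA(\mathbb{R},\mathbb{X},\mu)$, as claimed.

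The main obstacle is transferring the \emph{set-level} inequality $(M_2)$ into an \emph{integral} inequality. The crucial point is to truncate the distribution-function integral to the finite range $[0,M]$, using the boundedness of $\psi$, so that the additive constant $m$ contributes only the bounded term $mM$, which is then annihilated by the divergence of $\mu([-z,z])$; without the boundedness of $\psi$ the constant term would integrate to infinity and the argument would break down.
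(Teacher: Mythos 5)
Your proof is correct. Note that the paper states this lemma with a citation to Miraoui [19] and reproduces no proof, so there is no in-paper argument to diverge from; judged on its own, your argument is complete. The almost automorphic half is the standard reflection-of-sequences argument (apply Bochner's criterion to $(-s_n)$, reflect the limit function), and it checks out. The crux is, as you correctly identify, upgrading the set-level hypothesis $(M_2)$ to an integral inequality, and your layer-cake truncation handles this soundly: each $A_t$ is Borel since $\psi$ is continuous, $t\mapsto\mu(-A_t)$ is monotone hence measurable, $A_t=\emptyset$ for $t>M$ kills the tail of the level integral, and $\mu([-z,z])\to\infty$ follows from $\mu(\mathbb{R})=+\infty$ by continuity from below, which is exactly what annihilates the additive term $mM$. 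For comparison, the usual route in this literature (the cited source, in the style of Blot--Cieutat--Ezzinbi) instead invokes the known equivalent characterization of $\mu$-ergodicity via superlevel sets: $\psi\in\mathcal{E}(\mathbb{R},\mathbb{X},\mu)$ if and only if for every $\varepsilon>0$ one has $\mu(\{t\in[-z,z]:\|\psi(t)\|>\varepsilon\})/\mu([-z,z])\to0$ as $z\to\infty$; with that in hand, $(M_2)$ is applied to a single superlevel set per $\varepsilon$ and no integration in the level variable is needed. Your distribution-function computation effectively re-proves the relevant direction of that equivalence inline, so the two arguments are the same idea in different packaging: yours is self-contained, while the characterization-based proof is shorter once the equivalence lemma is available.
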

\begin{lemma} (Miraoui \cite{miraoui2020})\label{lemma2}
If $\mu \in \mathcal{M}$ satisfies hypothesis {($M_1$)}, then for all $ p\geq 1,$
$$L^p(\mathbb{R},\mathbb{X},d\mu)\subset \mathcal{E}(\mathbb{R},\mathbb{X},\mu).$$
\end{lemma}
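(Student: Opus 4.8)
The plan is to verify directly the defining ergodic limit for an arbitrary $f\in L^p(\mathbb{R},\mathbb{X},d\mu)$ (taken with its bounded continuous representative, so that the quotient below lives in the ambient space $\mathcal{E}(\mathbb{R},\mathbb{X},\mu)$), namely that
$$\lim_{z\to\infty}\frac{1}{\mu([-z,z])}\int_{[-z,z]}\|f(y)\|\,d\mu(y)=0.$$
The whole argument rests on two facts: the denominator $\mu([-z,z])$ diverges to $+\infty$, while the numerator grows strictly more slowly, at rate $\mu([-z,z])^{1/q}$, by an application of H\"older's inequality. The main point requiring care is not analytic depth but keeping track of the exponents so that the quotient carries a strictly negative power of $\mu([-z,z])$, together with a clean justification of the divergence of $\mu([-z,z])$.

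First I would record that $\mu([-z,z])\to+\infty$ as $z\to\infty$. Since $\mu\in\mathcal{M}$ we have $\mu(\mathbb{R})=+\infty$, and the nested sets $[-z,z]$ increase to $\mathbb{R}$; as $z\mapsto\mu([-z,z])$ is nondecreasing, continuity from below of $\mu$ gives $\mu([-z,z])\uparrow\mu(\mathbb{R})=+\infty$. In particular $\mu([-z,z])>0$ for all large $z$, so the quotient is well defined for large $z$. Here hypothesis {($M_1$)} is what places $\mathcal{E}(\mathbb{R},\mathbb{X},\mu)$ inside the translation-invariant Banach framework of Theorem \ref{theorem2.8}, but the divergence itself uses only $\mu(\mathbb{R})=+\infty$.

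Next I would carry out the main estimate. Fix $p>1$ with conjugate exponent $q$, so $\tfrac1p+\tfrac1q=1$. Applying H\"older's inequality on the finite-measure set $[-z,z]$ to the product $\|f\|\cdot 1$ yields
$$\int_{[-z,z]}\|f(y)\|\,d\mu(y)\le\Big(\int_{[-z,z]}\|f(y)\|^{p}\,d\mu(y)\Big)^{1/p}\big(\mu([-z,z])\big)^{1/q}\le\|f\|_{L^{p}(\mathbb{R},\mathbb{X},d\mu)}\,\big(\mu([-z,z])\big)^{1/q}.$$
Dividing by $\mu([-z,z])$ and using $\tfrac1q-1=-\tfrac1p$ gives
$$\frac{1}{\mu([-z,z])}\int_{[-z,z]}\|f(y)\|\,d\mu(y)\le\|f\|_{L^{p}(\mathbb{R},\mathbb{X},d\mu)}\,\big(\mu([-z,z])\big)^{-1/p}.$$
Since $\mu([-z,z])\to+\infty$ and $-\tfrac1p<0$, the right-hand side tends to $0$, which is precisely the $\mu$-ergodicity condition.

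Finally I would dispatch the borderline case $p=1$ separately, where H\"older degenerates: one simply bounds $\int_{[-z,z]}\|f\|\,d\mu\le\|f\|_{L^{1}(\mathbb{R},\mathbb{X},d\mu)}$ and divides by $\mu([-z,z])\to+\infty$ to reach the same limit. Combining both cases shows that every $f\in L^{p}(\mathbb{R},\mathbb{X},d\mu)$ lies in $\mathcal{E}(\mathbb{R},\mathbb{X},\mu)$, establishing the claimed inclusion. I expect the only genuinely delicate point to be interpretive rather than computational: because $\mathcal{E}(\mathbb{R},\mathbb{X},\mu)$ is defined as a subset of $\mathcal{BC}(\mathbb{R},\mathbb{X})$, the inclusion must be read for the bounded continuous representatives, to which the estimates above apply verbatim.
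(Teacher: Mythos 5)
Your proof is correct; note that the paper itself does not prove this lemma but cites it from Miraoui \cite{miraoui2020}, where the argument is exactly the one you give: H\"older's inequality on $[-z,z]$ producing the bound $\|f\|_{L^p}\,\mu([-z,z])^{-1/p}$, together with $\mu([-z,z])\uparrow\mu(\mathbb{R})=+\infty$ from the definition of $\mathcal{M}$, with the trivial estimate handling $p=1$. Your two side remarks are also accurate: hypothesis ($M_1$) plays no role in the limit computation itself, and the inclusion must indeed be read inside $\mathcal{BC}(\mathbb{R},\mathbb{X})$, since $\mu$-ergodicity is defined only for bounded continuous functions.
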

\begin{lemma}(Ben Salah et al.\cite{mounir})
\label{comp0} Suppose that
hypotheses
 {($h_0$)} and {($M_0$)} hold. If   $v\in
\mathcal{PAA}(\mathbb{R},\mathbb{R},\mu),$ then $[t\mapsto
v(\beta (t))]\in \mathcal{PAA}(\mathbb{R},\mathbb{R},\mu ).$
\end{lemma}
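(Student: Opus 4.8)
The plan is to use the decomposition of a $\mu$-pseudo almost automorphic function and treat its two components separately. Writing $v = g + h$ with $g \in AA(\mathbb{R},\mathbb{R})$ and $h \in \mathcal{E}(\mathbb{R},\mathbb{R},\mu)$, we have $v \circ \beta = (g \circ \beta) + (h \circ \beta)$, so it suffices to show that $g \circ \beta \in AA(\mathbb{R},\mathbb{R})$ and $h \circ \beta \in \mathcal{E}(\mathbb{R},\mathbb{R},\mu)$. The first inclusion is immediate: hypothesis $(M_0)$ asserts precisely that almost automorphy is preserved under composition with $\beta$, so $g \circ \beta \in AA(\mathbb{R},\mathbb{R})$. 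Since $h$ is bounded and continuous and $\beta$ is continuous, $h \circ \beta$ is again bounded and continuous, so the whole difficulty is concentrated in verifying the ergodic limit
$$\lim_{z \to \infty} \frac{1}{\mu([-z,z])} \int_{[-z,z]} |h(\beta(y))|\, d\mu(y) = 0.$$

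To handle this I would pass to the pushforward measure $\mu_\beta$ from $(h_0)$, namely $\mu_\beta(O) = \mu(\beta^{-1}(O))$. Because $\beta$ is continuous and strictly increasing, for every $y$ one has $y \in [-z,z] \iff \beta(y) \in [\beta(-z),\beta(z)]$, so applying the change-of-variables formula for the pushforward to the function $\phi(t) = |h(t)|\,\mathbf{1}_{[\beta(-z),\beta(z)]}(t)$ gives
$$\int_{[-z,z]} |h(\beta(y))|\, d\mu(y) = \int_{\mathbb{R}} \phi(\beta(y))\, d\mu(y) = \int_{[\beta(-z),\beta(z)]} |h(t)|\, d\mu_\beta(t).$$
Next I would invoke the density bound $d\mu_\beta(t) \leq \lambda(t)\, d\mu(t)$ from $(h_0)$ together with the inclusion $[\beta(-z),\beta(z)] \subseteq [-T(z),T(z)]$, where $T(z) = |\beta(z)| + |\beta(-z)|$, and the pointwise estimate $\lambda(t) \leq S(T(z))$ valid on $[-T(z),T(z)]$. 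This yields
$$\int_{[\beta(-z),\beta(z)]} |h(t)|\, d\mu_\beta(t) \leq S(T(z)) \int_{[-T(z),T(z)]} |h(t)|\, d\mu(t),$$
and therefore
$$\frac{1}{\mu([-z,z])} \int_{[-z,z]} |h(\beta(y))|\, d\mu(y) \leq \frac{\mu([-T(z),T(z)])}{\mu([-z,z])}\, S(T(z)) \cdot \frac{1}{\mu([-T(z),T(z)])} \int_{[-T(z),T(z)]} |h(t)|\, d\mu(t).$$
The first factor is bounded for large $z$ by the $\limsup$ condition in $(h_0)$, while the second factor is exactly the $\mu$-ergodic average of $h$ over the windows $[-T(z),T(z)]$, which tends to $0$ since $h \in \mathcal{E}(\mathbb{R},\mathbb{R},\mu)$; hence the product tends to $0$ and $h \circ \beta \in \mathcal{E}(\mathbb{R},\mathbb{R},\mu)$. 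Combined with $g\circ\beta\in AA(\mathbb{R},\mathbb{R})$, this gives $v\circ\beta\in \mathcal{PAA}(\mathbb{R},\mathbb{R},\mu)$.

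The main obstacle is the measure-theoretic passage: one must justify the change of variables for the pushforward $\mu_\beta$ and, crucially, control the product of the ratio $\mu([-T(z),T(z)])/\mu([-z,z])\cdot S(T(z))$ with the ergodic average. The $\limsup$ hypothesis in $(h_0)$ is designed precisely to keep this ratio bounded. The only delicate point I anticipate is confirming that the ergodic average still tends to $0$ when the windows $[-T(z),T(z)]$ do not expand to all of $\mathbb{R}$ (for instance when $\beta$ is bounded); this is handled by returning to the intermediate estimate $\tfrac{S(T(z))}{\mu([-z,z])}\int_{[-T(z),T(z)]}|h|\,d\mu$ and observing that the numerator stays bounded while $\mu([-z,z]) \to \infty$, since $\mu(\mathbb{R}) = +\infty$.
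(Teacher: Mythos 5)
Your proof is correct, but there is nothing in the paper to compare it against: the paper states this lemma without proof, importing it from Ben-Salah et al.\ \cite{mounir}. That said, your argument is visibly the intended one, since the hypotheses are engineered for exactly this computation: {($M_0$)} disposes of the almost automorphic part $g\circ\beta$ in one line, and the ergodic half consumes every ingredient of {($h_0$)} in the order you use them --- the pushforward identity $\int_{[-z,z]}|h(\beta(y))|\,d\mu(y)=\int_{[\beta(-z),\beta(z)]}|h(t)|\,d\mu_{\beta}(t)$ (legitimate because $\beta$ is continuous and strictly increasing, so $\beta^{-1}([\beta(-z),\beta(z)])=[-z,z]$), the density bound $d\mu_{\beta}(t)\leq\lambda(t)\,d\mu(t)$, the inclusion $[\beta(-z),\beta(z)]\subseteq[-T(z),T(z)]$ with the bound $\lambda(t)\leq S(T(z))$ there, and finally the $\limsup$ condition controlling the factor $\mu([-T(z),T(z)])\,S(T(z))/\mu([-z,z])$ against the ergodic average of $h$ over $[-T(z),T(z)]$. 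Two refinements are worth recording. First, your worry about windows $[-T(z),T(z)]$ failing to expand is in fact vacuous: {($M_0$)} forces $\beta$ to be a bijection of $\mathbb{R}$, because a nonconstant almost automorphic function cannot have a finite limit at $+\infty$ or $-\infty$ (if $f\in\mathcal{AA}$ and $f(t)\to L$, the limit function $g$ along any sequence $s_n\to\infty$ is identically $L$, whence $f\equiv L$), so $\beta$ bounded on either side would make $x\circ\beta$ fail to be almost automorphic for, say, $x=\sin$; the paper itself invokes this bijectivity as a consequence of {($M_0$)} in the proof of Theorem 1.1. Your fallback via $\mu([-z,z])\to\mu(\mathbb{R})=+\infty$ is nevertheless correct and costs nothing. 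Second, since $T(z)$ is not a priori monotone, add the one-line observation that $\beta(z)$ and $\beta(-z)$ are monotone in $z$, so each of $|\beta(z)|,|\beta(-z)|$ has a limit in $[0,+\infty]$ and hence $T(z)$ either converges or tends to $+\infty$; this is exactly the dichotomy your case split uses, and in the unbounded case the ergodic average over $[-T(z),T(z)]$ tends to $0$ because $\mu$-ergodicity is a limit along all $r\to\infty$, monotonicity not being required.
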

\begin{lemma}
\label{comp2} Suppose that
hypotheses
 {($h_0$),($h_2$)} and {($M_0$)}--{($M_2$)} hold. If $f\in \mathcal{PAA}(\mathbb{R}^{3},\mathbb{R},\mu), $  and  $v\in
\mathcal{PAA}(\mathbb{R},\mathbb{R},\mu),$ then $[t\mapsto
f(t,v(\beta (t)),v(\beta (-t)))]\in \mathcal{PAA}(\mathbb{R},\mathbb{R},\mu ).$
\end{lemma}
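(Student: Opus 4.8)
The plan is to reduce the statement to a composition theorem of the form ``PAA composed with PAA is PAA'' and then to split the resulting composition into one almost automorphic and two $\mu$-ergodic pieces.

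First I would dispose of the inner arguments. Put $w_1(t):=v(\beta(t))$; by Lemma \ref{comp0}, under ($h_0$) and ($M_0$) we have $w_1\in\mathcal{PAA}(\mathbb{R},\mathbb{R},\mu)$. Since $w_2(t):=v(\beta(-t))=w_1(-t)$, Lemma \ref{opp} (under ($M_2$)) gives $w_2\in\mathcal{PAA}(\mathbb{R},\mathbb{R},\mu)$ as well. Hence it remains to prove that $t\mapsto f(t,w_1(t),w_2(t))\in\mathcal{PAA}(\mathbb{R},\mathbb{R},\mu)$ whenever $f\in\mathcal{PAA}(\mathbb{R}^3,\mathbb{R},\mu)$ obeys the Lipschitz bound ($h_2$) and $w_1,w_2\in\mathcal{PAA}(\mathbb{R},\mathbb{R},\mu)$; this composition statement is the heart of the lemma.

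Next I would write the decompositions $f=g+\phi$ with $g\in\mathcal{AA}(\mathbb{R}^3,\mathbb{R})$ and $\phi\in\mathcal{E}(\mathbb{R}^3,\mathbb{R},\mu)$, and $w_i=\alpha_i+\gamma_i$ with $\alpha_i\in\mathcal{AA}(\mathbb{R},\mathbb{R})$, $\gamma_i\in\mathcal{E}(\mathbb{R},\mathbb{R},\mu)$, and split
\[
f(t,w_1(t),w_2(t))=g(t,\alpha_1(t),\alpha_2(t))+\big[g(t,w_1(t),w_2(t))-g(t,\alpha_1(t),\alpha_2(t))\big]+\phi(t,w_1(t),w_2(t)).
\]
For the first term, the Lipschitz constant in ($h_2$) is inherited by the almost automorphic part $g$ (a consequence of the uniqueness of the decomposition, since $g$ arises as a limit of translates of $f$), so the classical composition theorem for almost automorphic functions yields $t\mapsto g(t,\alpha_1(t),\alpha_2(t))\in\mathcal{AA}(\mathbb{R},\mathbb{R})$. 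For the bracketed term, ($h_2$) applied to $g$ gives
\[
|g(t,w_1(t),w_2(t))-g(t,\alpha_1(t),\alpha_2(t))|\leq L_f\big(|\gamma_1(t)|+|\gamma_2(t)|\big),
\]
and since $\gamma_1,\gamma_2$ are $\mu$-ergodic and any nonnegative continuous function dominated by a $\mu$-ergodic one is itself $\mu$-ergodic, this difference lies in $\mathcal{E}(\mathbb{R},\mathbb{R},\mu)$.

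The hard part will be the third term $\phi(t,w_1(t),w_2(t))$. As $w_1,w_2$ are bounded, their joint range sits in a compact set $K\subset\mathbb{R}^2$, and I would dominate
\[
\frac{1}{\mu([-z,z])}\int_{[-z,z]}|\phi(t,w_1(t),w_2(t))|\,d\mu(t)\leq\frac{1}{\mu([-z,z])}\int_{[-z,z]}\sup_{u\in K}|\phi(t,u)|\,d\mu(t).
\]
The delicate point is showing that the right-hand side tends to $0$: this requires the \emph{uniform-on-compacta} $\mu$-ergodicity carried by membership in $\mathcal{E}(\mathbb{R}^3,\mathbb{R},\mu)$ (where ($M_1$) guarantees the ergodic space is translation invariant), or, failing a ready-made uniform statement, an approximation of the continuous maps $w_1,w_2$ by finitely-valued functions together with the equicontinuity supplied by ($h_2$). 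Granting this, $\phi(\cdot,w_1(\cdot),w_2(\cdot))\in\mathcal{E}(\mathbb{R},\mathbb{R},\mu)$, and adding the three contributions exhibits $f(\cdot,w_1(\cdot),w_2(\cdot))$ as an almost automorphic function plus a $\mu$-ergodic function, i.e.\ a member of $\mathcal{PAA}(\mathbb{R},\mathbb{R},\mu)$, as required.
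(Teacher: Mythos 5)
Your proposal is correct, and its skeleton---reduce the inner arguments via Lemmas \ref{comp0} and \ref{opp}, decompose into almost automorphic plus $\mu$-ergodic parts, and treat three terms---matches the paper's proof; but your splitting is genuinely different from the paper's. The paper writes $f(t,V(t),V(-t))=\varphi_1(t,V_1(t),V_1(-t))+\big[f(t,V(t),V(-t))-f(t,V_1(t),V_1(-t))\big]+\varphi_2(t,V_1(t),V_1(-t))$ (with $f=\varphi_1+\varphi_2$ and $V=V_1+V_2$), so the Lipschitz hypothesis ($h_2$) is applied to $f$ itself in the middle term, and the ergodic component is only ever evaluated at the bounded almost automorphic arguments $V_1(\pm t)$; your split $g(t,\alpha_1,\alpha_2)+\big[g(t,w_1,w_2)-g(t,\alpha_1,\alpha_2)\big]+\phi(t,w_1,w_2)$ instead puts the Lipschitz burden on the almost automorphic component $g$ and evaluates $\phi$ at the full $\mathcal{PAA}$ arguments. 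The price of your version is one extra ingredient the paper's split avoids: the transfer of the Lipschitz constant from $f$ to $g$. This is legitimate---under ($M_1$) the ergodic space is translation invariant, the decomposition is unique, and the almost automorphic part arises as a limit of translates of $f$, as you indicate---but it is a lemma you would need to state or cite, whereas the paper simply uses ($h_2$) on $f$ directly and controls its middle term by $L_f(\Vert V_2(t)\Vert+\Vert V_2(-t)\Vert)$ together with Lemma \ref{opp}, exactly parallel to your bound $L_f(|\gamma_1(t)|+|\gamma_2(t)|)$ with $\gamma_2(t)=\gamma_1(-t)$. The ``delicate point'' you flag on the $\phi$-term is real, but it is the same point in both routes (ergodicity of $t\mapsto\phi(t,u)$ uniformly for $u$ in a compact set containing the range of the substituted functions---the range of $(w_1,w_2)$ for you, of $(V_1(t),V_1(-t))$ for the paper), and it is resolved the same way: the decomposition of $f\in\mathcal{PAA}(\mathbb{R}^3,\mathbb{R},\mu)$ invoked from \cite{A1} already places the ergodic part in the uniform class $\mathcal{EU}(\mathbb{R}^3,\mathbb{R},\mu)$, which is precisely the uniform-on-compacta ergodicity you ask for. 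Your write-up is in fact more candid here than the paper's, which, after writing $\varphi\in\mathcal{EU}(\mathbb{R}^{3},\mathbb{R},\mu)$, never explicitly verifies ergodicity of its third term $\varphi_2(t,V_1(t),V_1(-t))$ at all; granting the $\mathcal{EU}$ membership, your argument closes completely and at no lower a level of rigor than the paper's own.
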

\begin{proof}Let
$f\in \mathcal{PAA}(\mathbb{R}^{3},\mathbb{R},\mu
) $. Then  $f$ can be written as $f=h+\varphi ,$ where $h\in\mathcal{AAU}(\mathbb{R}^{3},\mathbb{R}), \varphi\in\mathcal{EU}(\mathbb{R}^{3},\mathbb{R},\mu )$ (see \cite{A1}). We set $V(t)=v(\beta (t)$, for all $t\in \mathbb{R}$. By Lemma \ref{comp0}, we can conclude that $V\in \mathcal{PAA}(\mathbb{R},
\mathbb{R},\mu ),$
hence $V=V_{1}+V_{2},$ where $V_{1}\in \mathcal{AA}(\mathbb{R},\mathbb{R}),V_{2}\in \mathcal{E}(\mathbb{R},\mathbb{R},\mu )$,
and so we have
\begin{eqnarray*}
f(t,V(t),V(-t))&=&\varphi_1(t,V_{1}(t),V_{1}(-t))+
f(t,V(t),V(-t))\\
&-&f(t,V_{1}(t),V_{1}(-t))+\varphi_2
(t,V_{1}(t),V_{1}(-t)).
\end{eqnarray*}
On the one hand, we shall prove that $[t\rightarrow \varphi_1(t,V_{1}(t),V_{1}(-t))]\in \mathcal{AAU}(\mathbb{R}^{3},%
\mathbb{R})$. Let $H(t)=\varphi_1(t,V_{1}(t),V_{1}(-t)).$  If $\{s_{n}\}$ is a sequence of real numbers, then we can extract a subsequence $\{\tau _{n}\}$
of $\{s_{n}\}$ such that\\

(1) $\displaystyle\lim_{n\rightarrow \infty }\varphi_1(t+\tau _{n},v,u)=\phi
(t,v,u), $ for all $t,v,u\in \mathbb{R};$ 

(2) $\displaystyle\lim_{n\rightarrow \infty }\phi (t-\tau
_{n},v,u)=\varphi_1(t,v,u), $ for all $t,v,u\in \mathbb{R};$ 

(3) $\displaystyle\lim_{n\rightarrow \infty }V_{1}(t+\tau
_{n},v,u)=U_{1}(t,v,u),$ for all $t,v,u\in \mathbb{R};$ 

(4) $\displaystyle\lim_{n\rightarrow \infty }U_{1}(t-\tau
_{n},v,u)=V_{1}(t,v,u),$ for all $t,v,u\in \mathbb{R}.$\\

\noindent
If  $\Phi (t):\mathbb{R}\longrightarrow \mathbb{R}$ by $\Phi
(t)=\phi (t,V_{1}(t),U_{1}(t)),$ then we can show that
\begin{equation*}
\lim_{n\rightarrow \infty }H(t+\tau _{n})=\Phi (t);\;\lim_{n\rightarrow
\infty }\Phi (t-\tau _{n})=H(t),\ \hbox{for all} \ t \in \mathbb{R}
\end{equation*}
and we get
\begin{eqnarray*}
\Vert H(t+\tau _{n})-\Phi (t)\Vert &\leq &\Vert \varphi_1(t+\tau _{n},V_{1}(t+\tau
_{n}),V_{1}(-t+\tau _{n}))-\varphi_1(t+\tau _{n},U_{1}(t),U_{1}(-t))\Vert \\
&+&\Vert \varphi_1(t+\tau _{n},U-1(t),U_{1}(-t))-\phi (t,U_{1}(t),U_{1}(-t))\Vert .
\end{eqnarray*}

Since $V_{1}(t)$ is almost automorphic, it follows that $V_{1}(t),$ and $U_{1}(t)$ are
bounded. Therefore there exists a bounded subset $K\subset \mathbb{R}.$ From
(3) and  (\textbf{$h_2$}), we see that $\varphi_1(t,V_{1}(t),V_{1}(-t))$ are
uniformly continuous on every bounded subset $K\subset \mathbb{R},$ hence
\begin{equation*}
\lim_{n\rightarrow \infty }\Vert \varphi_1(t+\tau _{n},V_{1}(t+\tau
_{n}),V_{1}(-t+\tau _{n}))-\varphi_1(t+\tau _{n},U_{1}(t),U_{1}(-t))\Vert =0
\end{equation*}
therefore
\begin{equation*}
\lim_{n\rightarrow \infty }\Phi (t-\tau _{n})=H(t),\;\text{for\ all}\;t\;\in
\;\mathbb{R}.
\end{equation*}
This proves that $H$ is an almost automorphic function.
On the other hand, we shall
show that $[t\rightarrow
f(t,V(t),V(-t))-f(t,V_{1}(t),V_{1}(-t))]\in \mathcal{E}(\mathbb{R},\mathbb{R}
,\mu ).$

We consider now the following function $\Phi (t)=f(t,V(t),V(-t))-f(t,V_{1}(t),V_{1}(-t)).$ Clearly, $\Phi (t)\in
\mathcal{BC}(\mathbb{R},\mathbb{R}).$
Since
\begin{equation*}
\Vert f(t,u_{1},u_{2})-f(t,v_{1},v_{2})\Vert \leq L_{f}(\Vert
u_{1}-v_{1}\Vert +\Vert u_{2}-v_{2}\Vert),
\end{equation*}
we have
\begin{eqnarray*}
\frac{1}{\mu ([-r,r])}\int_{-r}^{r}\Vert \Phi (t)\Vert d\mu (t) &=&\frac{1}{
\mu ([-r,r])}\int_{-r}^{r}\Vert f(t,V(t),V(-t))-f(t,V_{1}(t),V_{1}(-t))\Vert
d\mu (t) \\
&\leq &\frac{1}{\mu ([-r,r])}\int_{-r}^{r}L_{f}^{1}\Vert V(t)-V_{1}(t)\Vert
+L_{f}^{2}\Vert V(-t)-V_{1}(-t)\Vert d\mu (t) \\
&\leq &\frac{L_{f}}{\mu ([-r,r])}\int_{-r}^{r}\Vert V_{2}(t)\Vert d\mu
(t)+\frac{L_{f}}{\mu ([-r,r])}\int_{-r}^{r}\Vert V_{2}(-t)\Vert d\mu (t),
\end{eqnarray*}
so by  Lemma \ref{opp},
\begin{equation*}
\displaystyle\lim_{r\rightarrow \infty }\frac{1}{\mu ([-r,r])}\displaystyle
\int_{-r}^{r}\Vert \Phi (t)\Vert d\mu (t)=0.
\end{equation*}
Therefore $[t\rightarrow f(t,V(t),V(-t))=f(t,v(\beta (t)),v(\beta (-t)))]\in \mathcal{PAA}(\mathbb{
R},\mathbb{R},\mu ).$
\end{proof}
\begin{lemma}
\label{safa2} Suppose that hypotheses {($h_0$),($h_2$), ($h_4$)} and ({$M_0$})--({$M_2$}) hold.
Then for every $h\in
\mathcal{PAA}(\mathbb{R}^3,\mathbb{R},\mu ), v\in
\mathcal{PAA}(\mathbb{R},\mathbb{R},\mu ),$
$$[t\mapsto \displaystyle\int_{t}^{+\infty }K(s-t)h(s,v(\beta (s)),v(\beta (-s)))ds]\in
\mathcal{PAA}(\mathbb{R},\mathbb{R},\mu).$$
\end{lemma}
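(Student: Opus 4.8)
The plan is to reduce the statement to the action of a convolution operator on a single $\mu$-paa function. First I would invoke Lemma \ref{comp2}: under $(h_0)$, $(h_2)$ and $(M_0)$--$(M_2)$, the composed function $\psi(s):=h(s,v(\beta(s)),v(\beta(-s)))$ belongs to $\mathcal{PAA}(\mathbb{R},\mathbb{R},\mu)$. After the substitution $u=s-t$, the function under study becomes $F(t)=\int_0^{+\infty}K(u)\psi(t+u)\,du$, so it suffices to prove that $F\in\mathcal{PAA}(\mathbb{R},\mathbb{R},\mu)$ whenever $\psi\in\mathcal{PAA}(\mathbb{R},\mathbb{R},\mu)$ and $K$ satisfies $(h_4)$. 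Writing $\psi=g+\varphi$ with $g\in\mathcal{AA}(\mathbb{R},\mathbb{R})$ and $\varphi\in\mathcal{E}(\mathbb{R},\mathbb{R},\mu)$, I split $F=F_1+F_2$ with $F_1(t)=\int_0^{+\infty}K(u)g(t+u)\,du$ and $F_2(t)=\int_0^{+\infty}K(u)\varphi(t+u)\,du$; boundedness and continuity of both pieces follow at once from $(h_4)$ and the bound $|K(u)\psi(t+u)|\le \|\psi\|_\infty K(u)\in L^1(\mathbb{R}^+)$ together with dominated convergence.

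For the almost automorphic part, given a real sequence $(s_n)$ I extract a subsequence $(s_{n_k})$ along which $g(t+s_{n_k})\to \tilde g(t)$ and $\tilde g(t-s_{n_k})\to g(t)$ pointwise. Since $g$ and $\tilde g$ are bounded by $\|g\|_\infty$ and $K\in L^1(\mathbb{R}^+)$, the integrand $K(u)g(t+u+s_{n_k})$ is dominated by $\|g\|_\infty K(u)$, so the Dominated Convergence Theorem yields $F_1(t+s_{n_k})\to \int_0^{+\infty}K(u)\tilde g(t+u)\,du=:\tilde F_1(t)$ and, symmetrically, $\tilde F_1(t-s_{n_k})\to F_1(t)$, for every $t$. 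Hence $F_1\in\mathcal{AA}(\mathbb{R},\mathbb{R})$.

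For the ergodic part, I estimate $|F_2(t)|\le\int_0^{+\infty}K(u)|\varphi(t+u)|\,du$ and apply Tonelli's theorem to interchange the order of integration, obtaining
\begin{equation*}
\frac{1}{\mu([-r,r])}\int_{-r}^{r}|F_2(t)|\,d\mu(t)\le \int_0^{+\infty}K(u)\Big(\frac{1}{\mu([-r,r])}\int_{-r}^{r}|\varphi(t+u)|\,d\mu(t)\Big)\,du.
\end{equation*}
By the translation invariance of the ergodic space under $(M_1)$ (which underlies Theorem \ref{theorem2.8}), the translate $t\mapsto\varphi(t+u)$ is again $\mu$-ergodic for each fixed $u$, so the inner average tends to $0$ as $r\to\infty$; moreover it is bounded by $\|\varphi\|_\infty$ uniformly in $r$ and $u$. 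Since $\|\varphi\|_\infty K\in L^1(\mathbb{R}^+)$, a second application of the Dominated Convergence Theorem (along an arbitrary sequence $r\to\infty$) lets me pass the limit inside the $u$-integration and conclude that the left-hand side tends to $0$, i.e. $F_2\in\mathcal{E}(\mathbb{R},\mathbb{R},\mu)$. Combining the two parts gives $F=F_1+F_2\in\mathcal{PAA}(\mathbb{R},\mathbb{R},\mu)$, as required.

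I expect the main obstacle to be the ergodic estimate: justifying the interchange of the limit in $r$ with the $u$-integration. This is where hypothesis $(M_1)$ is essential, since it guarantees both that each translate $\varphi(\cdot+u)$ stays $\mu$-ergodic and that the normalized averages are uniformly bounded, which is exactly what makes the dominated convergence argument close.
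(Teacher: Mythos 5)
Your proposal is correct and follows essentially the same route as the paper's own proof: compose via Lemma \ref{comp2}, decompose the composed function into an almost automorphic part plus a $\mu$-ergodic part, handle the almost automorphic part by Bochner-type subsequence extraction with dominated convergence under $(h_4)$, and handle the ergodic part by Tonelli, translation invariance under $(M_1)$ (Theorem \ref{theorem2.8}), and a second dominated-convergence pass. If anything, your write-up is more explicit than the paper about the uniform bound $\|\varphi\|_\infty$ on the normalized averages that legitimizes interchanging the limit in $r$ with the integration in $u$.
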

\begin{proof}
By Lemma \ref{comp2}, we know that $[t\longmapsto h(t,v(\beta (t)),v(\beta (-t)))]\in
\mathcal{PAA}(\mathbb{R},\mathbb{R},\mu),$ so $$
 h(t,v(\beta (t)),v(\beta (-t)))=h_{1}(t)+h_{2}(t),$$ where $h_{1}\in \mathcal{AA}(\mathbb{R},
\mathbb{R})$ and $h_{2}\in \mathcal{E}(\mathbb{R},\mathbb{R},\mu ).$ Set $$\displaystyle\Theta (t)=\int_{t}^{+\infty }K(s-t)h(s,v(\beta (s)),v(\beta (-s)))ds.$$
Then
\begin{eqnarray*}
\Theta (t) &=&\int_{t}^{+\infty }K(s-t)h_{1}(s)ds+\int_{t}^{+\infty }K(s-t)h_{2}(s)ds =\theta _{1}(t)+\theta _{2}(t),
\end{eqnarray*}
where $$\displaystyle\theta _{1}(t)=\int_{t}^{+\infty }K(s-t)h_{1}(s)ds
 \ \hbox{and} \
\displaystyle\theta _{2}(t)=\int_{t}^{+\infty }K(s-t)h_{2}(s)ds.$$\\
Since $u_{1}\in \mathcal{AA}(\mathbb{R},\mathbb{R}),$
it follows that  for every
sequence ${(\tau _{n}^{\prime })}_{n\in \mathbb{N}}$ there exists a
subsequence ($\tau _{n})$ such that
\begin{equation}
h_{1}(t)=\lim_{n\rightarrow \infty }u_{1}(t+\tau _{n})  \label{aa}
\end{equation}
is well-defined for each $t\in \mathbb{R}$ and
\begin{equation}
\lim_{n\rightarrow \infty }h_{1}(t-\tau _{n})=u_{1}(t), \
\hbox{for each} \
t\in \mathbb{R}.
 \label{aaa}
\end{equation}
 Let $M(t)=\displaystyle\int_{t}^{+\infty }K(s-t)u_{1}(s)ds$. Then
\begin{eqnarray*}
|\theta _{1}(t)-M(t+s_{n})| &=&|\int_{t}^{+\infty
}K(s-t)h_{1}(s)ds-\int_{t+s_{n}}^{+\infty }K(s-t-s_{n})u_{1}(s)ds| \\
&=&|\int_{t}^{+\infty }K(s-t)(h_{1}(s)-u_{1}(s+s_{n}))ds|.
\end{eqnarray*}
Using Eq. \eqref{aa}, hypotheses {($h_4$)} and the LDC Theorem, it follows that
\begin{equation*}
\Vert \int_{t}^{+\infty }K(s-t)(h_{1}(s)-u_{1}(s+s_{n}))ds\Vert
\longrightarrow 0,\;\text{as}\;n\rightarrow \infty ,\;t\in\mathbb{R}.
\end{equation*}
Therefore
$
\theta _{1}(t)=\lim_{n\rightarrow \infty }M(t+\tau _{n}),\;\hbox{for all} \ t\in\mathbb{R}.
$
Using the same argument, we also obtain
$\lim_{n\rightarrow \infty }h_{1}(t-\tau _{n})=u_{1}(t).$
Therefore, $\theta _{1}\in AA(\mathbb{R},\mathbb{R}).$

 To prove that $\Theta (t)\ PAA(\mathbb{R},\mathbb{R},\mu),$ we need to show that $\theta _{2}\in \mathcal{E}(\mathbb{R},\mathbb{R},\mu)$. We know that
\begin{eqnarray*}
\lim_{r\rightarrow +\infty }\dfrac{1}{\mu \lbrack -r,r]}\int_{-r}^{r}\Vert
\theta _{2}(t)\Vert d\mu (t) &=&\lim_{r\rightarrow +\infty }\dfrac{1}{\mu
\lbrack -r,r]}\int_{-r}^{r}\int_{t}^{+\infty }\Vert K(s-t)h_{2}(s)ds\Vert
d\mu (t) \\
&\leq &\lim_{r\rightarrow +\infty }\dfrac{1}{\mu \lbrack -r,r]}
\int_{-r}^{r}\int_{t}^{+\infty }\Vert K(s-t)\Vert \Vert h_{2}(s)\Vert dsd\mu
(t) \\
&\leq &\lim_{r\rightarrow +\infty }\dfrac{1}{\mu \lbrack -r,r]}
\int_{-r}^{r}\int_{0}^{+\infty }\Vert K(y)\Vert \Vert h_{2}(y+t)\Vert dyd\mu
(t) \\
&= &\lim_{r\rightarrow +\infty }\int_{0}^{+\infty }\dfrac{K(y)}{\mu \lbrack -r,r]}
\int_{-r}^{r}\Vert h_{2}(y+t)\Vert d\mu (t)dy.
\end{eqnarray*}

By  the LDC Theorem and Theorem \ref{theorem2.8}, we have
\begin{equation*}
\displaystyle\lim_{r\rightarrow +\infty }\dfrac{1}{\mu \lbrack -r,r]}
\int_{-r}^{r}\Vert \theta _{2}(t)\Vert d\mu (t)\leq \int_{0}^{+\infty }K(y)\lim_{r\rightarrow +\infty }\dfrac{1}{\mu \lbrack
-r,r]}\int_{-r}^{r}\Vert h_{2}(y+t)\Vert d\mu (t)dy=0.
\end{equation*}
It follows that $[t\mapsto \displaystyle\int_{t}^{+\infty }K(s-t)h(s,v(\beta (s)),v(\beta (-s)))ds]\in
\mathcal{PAA}(\mathbb{R},\mathbb{R},\mu).$
\end{proof}
\begin{remark}
We have shown that
\begin{equation}\label{2019}
\Big{[}t \rightarrow \displaystyle \int_{t}^{+\infty}
K(s-t)h(s,v(\beta (s)),v(\beta (-s)))ds \Big{]} \in \mathcal{PAA}(\mathbb{R},\mathbb{R},\mu).
\end{equation}
From ({$M_2$}) and equation \eqref{2019} we can also obtain
\begin{equation*}
\Big{[}t \rightarrow \displaystyle \int_{-t}^{+\infty}
K(s+t)h(s,v(\beta (s)),v(\beta (-s)))ds \Big{]} \in \mathcal{PAA}(\mathbb{R},\mathbb{R},\mu).
\end{equation*}
\end{remark}
\begin{lemma}(Ben Salah et al.~\cite{mounir})\label{comp3}
Let $\mu\in \mathcal{M}$, $g \in PAA(\mathbb{R},\mathbb{R}^2,\mu)$,
 $h \in PAAU(\mathbb{R}
\times \mathbb{R}^2,\mathbb{R},\mu)$, and
suppose that hypotheses
  {($M_1$)}
  and
   {($h'_3$)} hold. Then $[t \longmapsto h(t,g(t))]\in PAA(\mathbb{R},\mathbb{R},\mu).$
\end{lemma}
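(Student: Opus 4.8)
The plan is to imitate the structure of the proof of Lemma \ref{comp2}, splitting $h(t,g(t))$ into one almost automorphic piece and two $\mu$-ergodic pieces. First I would write $h=h_1+h_2$ with $h_1\in\mathcal{AAU}(\mathbb{R}\times\mathbb{R}^2,\mathbb{R})$ and $h_2\in\mathcal{EU}(\mathbb{R}\times\mathbb{R}^2,\mathbb{R},\mu)$, and $g=g_1+g_2$ with $g_1\in\mathcal{AA}(\mathbb{R},\mathbb{R}^2)$ and $g_2\in\mathcal{E}(\mathbb{R},\mathbb{R}^2,\mu)$. The decomposition to exploit is
$$h(t,g(t))=h_1(t,g_1(t))+\big[h(t,g(t))-h(t,g_1(t))\big]+h_2(t,g_1(t)),$$
so the goal reduces to showing that the first term lies in $\mathcal{AA}(\mathbb{R},\mathbb{R})$ and that the remaining two lie in $\mathcal{E}(\mathbb{R},\mathbb{R},\mu)$.

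For the term $h_1(t,g_1(t))$ I would argue exactly as in Lemma \ref{comp2}: given a real sequence, extract a common subsequence $(\tau_n)$ along which the translates of $h_1$ (in its first argument) and of $g_1$ converge pointwise, and back; then use the boundedness, hence relative compactness, of the range of the almost automorphic function $g_1$ together with the uniform-in-the-second-variable almost automorphy of $h_1$ to control the cross terms, and conclude the required pointwise convergence of the translates of $t\mapsto h_1(t,g_1(t))$.

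The two ergodic terms are where hypotheses {($h'_3$)} and {($M_1$)} enter. For the difference term I would invoke the Lipschitz bound to obtain $|h(t,g(t))-h(t,g_1(t))|\le L_h(t)\,\|g_2(t)\|$, and then apply H\"older's inequality with exponents $p,q$ on $[-r,r]$, using $L_h\in\mathcal{L}^p(\mathbb{R},\mathbb{R},d\mu)$. Since $g_2$ is bounded I can dominate $\|g_2\|^q\le\|g_2\|_\infty^{q-1}\|g_2\|$, so ergodicity of $g_2$ gives $\frac{1}{\mu([-r,r])}\int_{-r}^r\|g_2\|^q\,d\mu\to0$; writing $\mu([-r,r])^{-1}=\mu([-r,r])^{-1/p}\mu([-r,r])^{-1/q}$ and using $\mu(\mathbb{R})=+\infty$, the extra factor $\mu([-r,r])^{-1/p}$ decays and the normalized mean of the difference term tends to $0$. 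For $h_2(t,g_1(t))$, I would set $K=\overline{g_1(\mathbb{R})}$, which is compact, cover it by finitely many small balls, and combine the uniform continuity of $h_2$ on $K$ with the uniform $\mu$-ergodicity of $h_2$ over $K$ to conclude ergodicity of the composition.

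The main obstacle I anticipate is the ergodic estimate for the difference term: unlike the constant-Lipschitz case of Lemma \ref{comp2}, here $L_h$ is only $\mathcal{L}^p(d\mu)$-integrable, so the naive factoring used there fails, and one must balance the H\"older exponents against the growth of $\mu([-r,r])$ to recover the vanishing mean. A secondary technical point is the passage, in the treatment of $h_2(t,g_1(t))$, from uniform ergodicity on the compact range of $g_1$ to ergodicity of the composition itself, which is routine but must be carried out with the covering/approximation argument made explicit.
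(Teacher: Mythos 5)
The paper never proves Lemma \ref{comp3}: it is imported verbatim from Ben Salah et al.~\cite{mounir}, so the only internal point of comparison is the paper's proof of the constant-Lipschitz analogue, Lemma \ref{comp2}, and your outline is exactly that standard composition argument, correctly adapted. The decomposition $h(t,g(t))=h_1(t,g_1(t))+\big[h(t,g(t))-h(t,g_1(t))\big]+h_2(t,g_1(t))$ is exact; the treatment of $h_1(t,g_1(t))$ via relative compactness of the range of $g_1$ is the same device used in Lemma \ref{comp2}; and your H\"older/interpolation estimate for the difference term is valid, since $\mu([-r,r])\to\mu(\mathbb{R})=+\infty$ makes the factor $\mu([-r,r])^{-1/p}$ vanish while ergodicity of $g_2$ kills the other factor. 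Two comparative remarks. First, the ``main obstacle'' you anticipate is illusory: the paper's Lemma \ref{lemma2} says that under ($M_1$) one has $L^p(\mathbb{R},\mathbb{X},d\mu)\subset\mathcal{E}(\mathbb{R},\mathbb{X},\mu)$, so the naive bound $|h(t,g(t))-h(t,g_1(t))|\le L_h(t)\big(|g_2^{(1)}(t)|+|g_2^{(2)}(t)|\big)\le 2\|g_2\|_\infty L_h(t)$ already gives ergodicity of the difference term in one line --- this is precisely where ($M_1$) enters the hypotheses; interestingly, your interpolation route never invokes ($M_1$) for this term, using only $\mu(\mathbb{R})=+\infty$, so you obtain that step under slightly weaker assumptions, at the cost of a longer computation. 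Second, the one genuine soft spot is your treatment of $h_2(t,g_1(t))$: if the definition of $\mathcal{EU}$ builds in the supremum over compact sets, then with $K=\overline{g_1(\mathbb{R})}$ the bound $\frac{1}{\mu([-r,r])}\int_{-r}^{r}\sup_{x\in K}|h_2(t,x)|\,d\mu(t)\to 0$ finishes immediately and no covering is needed; if instead one only has ergodicity of $h_2(\cdot,x)$ pointwise in $x$, your covering argument additionally requires equicontinuity of $h_2=h-h_1$ in the second variable, which does \emph{not} follow for free from ($h'_3$), because the time-dependent Lipschitz bound on $h$ must first be shown to be inherited by the almost automorphic component $h_1$ (classical for constant Lipschitz constants, cf. Blot et al.~\cite{Khalil2}, but requiring an explicit argument here). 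You should either adopt the sup-form definition of $\mathcal{EU}$ or supply that inheritance step; with that clarified, your proof is complete and matches the argument in the cited source.
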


\section{Proofs of Main Results}
\subsection{Proof of Theorem 1.1}
\begin{proof}
By
Aftabizadeh and Wiener
\cite{1}, for any
$f,h\in PAA(\mathbb{R},\mathbb{R},\mu),$ a particular solution of
equation \eqref{X2} is as follows
\begin{align}\label{g}
\Gamma x(t)=& -\frac{1}{2\lambda }\Big[\exp (\lambda t)\int_{t}^{\infty
}\exp
(-\lambda y)\Big((\lambda -a)f(y,x(y),x(-y))+bf(-y,x(-y),x(y))\Big)dy\Big]\nonumber \\
& +\frac{1}{2\lambda }\Big[\exp (-\lambda t)\int_{-\infty }^{t}\exp
(\lambda y)\Big((\lambda +a)f(y,x(y),x(-y))-bf(-y,x(-y),x(y))\Big)dy\Big]\nonumber\\
&- \frac{1}{2\lambda }\Big[\exp (\lambda t)\int_{t}^{\infty
}\exp
(-\lambda y)\Big((\lambda -a)g(y)+bg(-y)\Big)dy\Big]\nonumber \\
& +\frac{1}{2\lambda }\Big[\exp (-\lambda t)\int_{-\infty }^{t}\exp
(\lambda y)\Big((\lambda +a)g(y)-bg(-y)\Big)dy\Big],
\end{align}

where $$g(y)=\int_{y}^{+\infty
}K(s-y)h(s,u(\beta (s)),u(\beta(-s)))ds + \int_{-y}^{+\infty }K(s+y)h(s,u(\beta(s)),u(\beta(-s)))ds.$$
According to Lemmas \ref{opp}, \ref{comp2}, and \ref{safa2}, we can
conclude
\begin{equation*}
\lbrack y\mapsto \int_{y}^{+\infty }K(s-y)h(s,u(\beta (s)),u(\beta(-s)))ds]\in \mathcal{PAA}
(\mathbb{R},\mathbb{R},\mu ).
\end{equation*}
Also, by Lemma \ref{opp}, we have
\begin{equation*}
\lbrack y\mapsto \int_{-y}^{+\infty }K(s+y)h(s,u(\beta (s)),u(\beta(-s)))ds]\in \mathcal{PAA}
(\mathbb{R},\mathbb{R},\mu ).
\end{equation*}
Therefore $g \in \mathcal{PAA}(\mathbb{R},\mathbb{R},\mu )$. (We can also use the parity of  $g$ to see that $g\in\mathcal{PAA}(\mathbb{R},\mathbb{R},\mu )$.)

 So, using lemmas from Section 2, we can deduce that $\Gamma$ is a
mapping of $\mathcal{PAA}(\mathbb{R},\mathbb{R},\mu )$ into itself.
Set
\begin{eqnarray}\label{F} 
F(t,v(\beta(t)),v(\beta(-t)))
\nonumber &=&
f(t,v(\beta(t)),v(\beta(-t)))\\
 &+&
\int_{y}^{+\infty
}K(s-y)h(s,u(\beta (s)),u(\beta(-s)))ds\\
\nonumber &+& \int_{-y}^{+\infty }K(s+y)h(s,u(\beta(s)),u(\beta(-s)))ds.
\end{eqnarray}
It remains to show that
$\Gamma:\mathcal{PAA}(\mathbb{R},\mathbb{R},\mu )\to\mathcal{PAA}(\mathbb{R},\mathbb{R},\mu )$
 is a strict contraction.
 
Since by hypothesis {($M_0$)},
$\beta:\mathbb{R}\to\mathbb{R}$ is bijective,
it follows that for all $u,v\in \mathcal{PAA}(\mathbb{R},\mathbb{R},\mu )$,
the following holds
\vfill\eject
\begin{eqnarray*}
&&|F(t,v(\beta(t)),v(\beta(-t)))-F(t,u(\beta(t)),u(\beta(-t)))|\\
&=&|f(t,v(\beta(t)),v(\beta(-t)))-f(t,u(\beta(t)),u(\beta(-t)))| \\
&&+\int_{t}^{+\infty }K(s-t)\Big{(}h(s,v(\beta(s)),v(\beta(-s)))-h(s,u(\beta(s)),u(\beta(-s)))\Big{)}ds
\\
&&+\int_{-t}^{+\infty }K(t+s)(h(s,v(\beta(s)),v(\beta(-s)))-h(s,u(\beta(s)),u(\beta(-s))))ds \\
&\leq &|f(t,v(\beta(t)),v(\beta(-t)))-f(t,u(\beta(t)),u(\beta(-t)))| \\
&&+\int_{0}^{+\infty }K(s)\Big{(}
h((s+t),v(\beta(s+t)),v(\beta(-(s+t))))-h((s+t,u(\beta(s+t)),u(\beta(-(s+t))))\Big{)}ds \\
&&+\int_{0}^{+\infty }K(s)\Big{(}
h(s-t,v(\beta(s-t)),v(\beta(-(s-t))))-h(s-t,u(\beta(s-t)),u(\beta(-(s-t))))\Big{)}ds \\
&\leq &2(L_{f}+2cL_{h})\Vert v-u\Vert _{\infty },
\end{eqnarray*}
therefore
\begin{eqnarray*}
|\Gamma v(t)-\Gamma u(t)| &\leq &  \displaystyle\frac{|\lambda-a|+|\lambda+a|+2|b|}{\lambda^2}(L_{f}+2cL_{h})\Vert v-u\Vert _{\infty }.
\end{eqnarray*}
Since
\begin{equation*}
 \displaystyle\frac{|\lambda-a|+|\lambda+a|+2|b|}{\lambda^2}(L_f+2cL_h)
< 1,
\end{equation*}
 it follows that $\Gamma :\mathcal{PAA}(\mathbb{R},\mathbb{R},\mu
)\longrightarrow \mathcal{PAA}(\mathbb{R},\mathbb{R},\mu )$ is indeed a
 strict contraction. Therefore $\Gamma$ has a unique fixed point in  $\mathcal{PAA}(
\mathbb{R},\mathbb{R},\mu )$ and equation \eqref{X10} has a unique measure paa solution.
\end{proof}
\subsection{Proof of Theorem 1.2}
\begin{proof}
We consider the function $\Gamma$ defined in system \eqref{g}.
Using lemmas from Section 2 and paying attention to coefficients $L_f$ and $L_h$ which are not constants, we can deduce that $\Gamma$ is a mapping of $\mathcal{PAA}(\mathbb{R},\mathbb{R},\mu )$ into itself. It remains to show that $\Gamma$ is a strict contraction. Indeed, knowing that  $F$ is given by \eqref{F}, we have
\begin{eqnarray*}
&&|F(t,v(\beta(t)),v(\beta(-t)))-F(t,u(\beta(t)),u(\beta(-t)))| \\
&\leq &|f(t,v(\beta(t)),v(\beta(-t)))-f(t,u(\beta(t)),u(\beta(-t)))| \\
&&+\int_{0}^{+\infty }K(s)\Big{(}
h((s+t),v(\beta(s+t)),v(\beta(-(s+t))))-h((s+t,u(\beta(s+t)),u(\beta(-(s+t))))\Big{)}ds \\
&&+\int_{0}^{+\infty }K(s)\Big{(}
h(s-t,v(\beta(s-t)),v(\beta(-(s-t))))-h(s-t,u(\beta(s-t)),u(\beta(-(s-t))))\Big{)}ds \\
&\leq &\Big[2L_f(t)+4\Big(\int_0^{+
\infty}(K(y))^q dy\Big)^{\frac{1}{q}}\|L_h\|_{\mathcal{L}^p(\mathbb{R},\mathbb{R},dx)}\Big]\Vert v-u\Vert _{\infty },
\end{eqnarray*}
where $u,v\in \mathcal{PAA}(\mathbb{R},\mathbb{R},\mu )$, hence
\begin{eqnarray*}
|\Gamma v(t)-\Gamma u(t)| &\leq &  \displaystyle\frac{|\lambda-a|+|\lambda+a|+2|b|}{\lambda(q\lambda)^{\frac{1}{q}}}\Big[\|L_f\|_{\mathcal{L}^p(\mathbb{R},\mathbb{R},dx)}\\
&+&2(\int_0^{+\infty}(K(y))^qdy)^{\frac{1}{q}}\|L_h\|_{\mathcal{L}^p(\mathbb{R},\mathbb{R},dx)}\Big]\Vert v-u\Vert _{\infty }.
\end{eqnarray*}
Since
\begin{equation*}
\displaystyle\frac{|\lambda-a|+|\lambda+a|+2|b|}{\lambda(q\lambda)^{\frac{1}{q}}}\Big[\|L_f\|_{\mathcal{L}^p(\mathbb{R},\mathbb{R},dx)}
+2\Big(\int_0^{+\infty}(K(y))^q dy\Big)^{\frac{1}{q}}\|L_h\|_{\mathcal{L}^p(\mathbb{R},\mathbb{R},dx)}\Big]
< 1,
\end{equation*}
  the operator $\Gamma :\mathcal{PAA}(\mathbb{R},\mathbb{R},\mu
)\longrightarrow \mathcal{PAA}(\mathbb{R},\mathbb{R},\mu )$ is indeed a
 strict contraction. Therefore $\Gamma$ has a unique fixed point in  $\mathcal{PAA}(%
\mathbb{R},\mathbb{R},\mu )$ and equation \eqref{X10} has a unique measure paa solution.
\end{proof}
\section{Applications}
Let a
measure $\mu$
be defined by
$d\mu(t)=\rho (t)dt,$
where
$\rho (t)=\exp(\sin t),\,t\in \mathbb{R}.$
Then $\mu\in \mathcal{M}$
satisfies hypothesis
\textrm{{($M_1$)}.}
Since $2+\sin t\geq \sin(-t),$
 it follows that  if
 $I=[a,b]$,
 we have
 $1 +e^2\mu(I)\geq \mu(-I)$
 and so hypothesis
  {($M_2$)} is also satisfied.

 Consider the following integro-differential equations with reflection and delay.
\begin{eqnarray}\label{ref2}
   x'(t) &=& \sqrt{2}x(t)+x(-t)+\frac{\exp(-|t|)}{9}[\sin x(t-p) +\cos x(-t+p)]\nonumber\\
   &+&\int_{t}^{+\infty
}K(s-t)\frac{\exp(-|s|)}{9}[\sin x(s-p) +\cos x(-s+p)]ds \\
  &+& \int_{-t}^{+\infty }K(s+t)\frac{\exp(-|s|)}{9}[\sin x(s-p) +\cos x(-s+p)]ds,\nonumber
\end{eqnarray}
where $K(s)=\exp(-s)$, for all $s\in \mathbb{R}^+$ and  $p$ is a strictly positive real number
which denotes the delay. If we put $\beta(t)=t-p$, then hypothesis {($M_0$)} is satisfied, cf. Ben-Salah et al.\cite{mounir}.
Then
equation \eqref{ref2} is a
special case of  equation \eqref{X10} if we take
$$a=\sqrt{2}, b=1, \lambda=\sqrt{a^2-b^2}=1 \ \mbox{ and} \ f(t,x,y)=h(t,x,y)=\frac{\exp(-|t|)}{9}[\sin x +\cos y ].$$
Let $p=q=\frac{1}{2}$. Then
\begin{equation*}
|f(t,x_1,y_1)-f(t,x_2,y_2)|\leq L_f(t)\Big(|x_1-x_2|+|y_1-y_2|\Big), \ \mbox{for all} \  (x_1,y_1),\; (x_2,y_2)\in \mathbb{R}^2,
\end{equation*}
and
\begin{equation*}
|h(t,x_1,y_1)-h(t,x_2,y_2)|\leq L_h(t)\Big(|x_1-x_2|+|y_1-y_2|\Big), \ \mbox{for all} \  (x_1,y_1),\; (x_2,y_2)\in \mathbb{R}^2,
\end{equation*}
where $$[t\rightarrow L_f(t)=L_h(t)=\frac{\exp(-|t|)}{9} ]\in \mathcal{L}^2(\mathbb{R},\mathbb{R},dx)\cap
\mathcal{L}^2(\mathbb{R},\mathbb{R},d\mu),$$
 since
 $$\|L_f\|_{\mathcal{L}^2(\mathbb{R},\mathbb{R},dx)}=\|L_h\|_{\mathcal{L}^2(\mathbb{R},\mathbb{R},dx)}=\frac{1}{9}\ \  \mbox{ and}  \ \
 \|L_f\|_{\mathcal{L}^2(\mathbb{R},\mathbb{R},d\mu)}= \|L_h\|_{\mathcal{L}^2(\mathbb{R},\mathbb{R},d\mu)}\leq\frac{1}{9}\sqrt{e}.$$ This implies that
 hypothesis {($h_3$)} is satisfied. Since
$$\|L_f\|_{\mathcal{L}^2(\mathbb{R},\mathbb{R},dx)}
+2\Big(\int_0^{+\infty}(K(y))^2dy\Big)^{\frac{1}{2}}\|L_h\|_{\mathcal{L}^2(\mathbb{R},\mathbb{R},dx)}$$ $$=\frac{\sqrt{2}+1}{9}<\displaystyle
\frac{\lambda \sqrt{q\lambda} }{|\lambda-a|+|\lambda+a|+2|b| }=\frac{1}{\sqrt{2}+2},$$
we
can
deduce
that
all assumptions of Theorem \ref{final2} are satisfied and   thus
equation \eqref{ref2} has a unique
$\mu$-paa solution.
\section{Epilogue}
 In practice, the purely periodic phenomena is negligible, which gives the idea to find other solutions and consider single measure paa oscillations.
  Based on composition, completeness, Banach fixed point theorem, and change of variables theorems, we proved two very important results concerning the existence and uniqueness of a single measure paa solution of a new scalar integro-differential system.
Compared to previous works, this is first study of oscillations and dynamics of
single measure  paa solutions for certain integro-differential
equations with reflection for the case when $\beta(t)\neq t$.
Miraoui~\cite{miraoui2020} studied pap solutions with two measures for our equation \eqref{X10} for the case when $K=0$ or $h=0$ and  $\beta(t)=t$.
Ait Dads et al. \cite{ait2020} described equation \eqref{X10} with matrix coefficients for the case when $\beta(t)=t$.
On the other hand, we studied the impact  of functions $K, f,h$ and $\beta$ on the uniqueness of the single measure paa solutions for equation \eqref{X10}.
Note that
 in the special case
 when $\beta(t)=t$,  hypotheses {($M_0$)} and {($h_0$)} are satisfied, therefore the following new results can be deduced
 from  Theorems 1.1 and
1.2.
\begin{corollary}\label{th1}
Suppose that $f,h \in \mathcal{PAA}(\mathbb{R},\mathbb{R},\mu)$ and
that
hypotheses {($h_1$)--($h_4$)}
and
{($M_1$)-($M_2$)}
hold. Then the following equation
\begin{eqnarray}\label{X11}
  u^{\prime }(y) &=& au(y)+bu(-y)+f(y,u(y),u(-y))+\int_{y}^{+\infty
}K(s-y)h(s,u(s),u(-s))ds \nonumber\\
  &+& \int_{-y}^{+\infty }K(s+y)h(s,u(s),u(-s))ds,\; y\in
\mathbb{R},
\end{eqnarray}
has a unique $\mu$-paa solution if and only if
\begin{equation*}
 \displaystyle\frac{|\lambda-a|+|\lambda+a|+2|b|}{\lambda^2}(L_f+2cL_h)
< 1.
\end{equation*}
\end{corollary}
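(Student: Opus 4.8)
The plan is to obtain this corollary as a direct specialization of Theorem 1.1 to the case $\beta = \mathrm{id}$, that is, $\beta(t) = t$. First I would observe that substituting $\beta(t) = t$ into the general equation \eqref{X10} collapses each argument $u(\beta(y)), u(\beta(-y)), u(\beta(s)), u(\beta(-s))$ to $u(y), u(-y), u(s), u(-s)$ respectively, so that \eqref{X10} becomes precisely equation \eqref{X11}. Consequently it suffices to verify that the two extra hypotheses required by Theorem 1.1 but absent from the corollary, namely $(M_0)$ and $(h_0)$, hold automatically when $\beta$ is the identity; once this is done, the conclusion of Theorem 1.1, including the ``if and only if'' characterization and the very same contraction constant $\frac{|\lambda-a|+|\lambda+a|+2|b|}{\lambda^2}(L_f + 2cL_h)$, transfers verbatim.

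For $(M_0)$: the identity map is continuous and strictly increasing, and for every $x \in \mathcal{AA}(\mathbb{R},\mathbb{R})$ we have $x \circ \beta = x \in \mathcal{AA}(\mathbb{R},\mathbb{R})$, so the hypothesis holds trivially. For $(h_0)$: with $\beta = \mathrm{id}$ one has $\beta^{-1}(O) = O$ for every Borel set $O$, hence $\mu_\beta = \mu$, and the domination inequality $d\mu_\beta \le \lambda(t)\,d\mu$ holds with $\lambda \equiv 1$. The only role played by $(h_0)$ in the proof of Theorem 1.1 is through Lemma \ref{comp0}, which ensures that $[t \mapsto v(\beta(t))]$ is again $\mu$-paa; when $\beta = \mathrm{id}$ this statement reads $v(\beta(t)) = v(t)$, which is $\mu$-paa by hypothesis, so the conclusion of Lemma \ref{comp0} is immediate and the quantitative limsup ratio condition in $(h_0)$ is never invoked.

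With $(M_0)$ and $(h_0)$ disposed of, the remaining hypotheses $(h_1)$–$(h_4)$ and $(M_1)$–$(M_2)$ are exactly those assumed in the corollary, so all hypotheses of Theorem 1.1 are in force. I would then appeal to Theorem 1.1 directly: the operator $\Gamma$ of \eqref{g} maps $\mathcal{PAA}(\mathbb{R},\mathbb{R},\mu)$ into itself by Lemmas \ref{opp}, \ref{comp2}, and \ref{safa2} applied with the trivial $\beta$, and the same Lipschitz estimate produces the contraction bound above. The Banach fixed point theorem then yields a unique fixed point of $\Gamma$ in $\mathcal{PAA}(\mathbb{R},\mathbb{R},\mu)$, i.e. a unique $\mu$-paa solution of \eqref{X11}, precisely when $\frac{|\lambda-a|+|\lambda+a|+2|b|}{\lambda^2}(L_f + 2cL_h) < 1$.

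The only point needing care, and hence the main (though essentially bookkeeping) obstacle, is confirming that $(h_0)$ is genuinely vacuous rather than a real constraint under $\beta = \mathrm{id}$: I would check each place where the proof of Theorem 1.1 uses $(h_0)$, namely Lemma \ref{comp0} and its downstream applications in Lemmas \ref{comp2} and \ref{safa2}, and verify that every such step goes through trivially once $v \circ \beta = v$, so that neither the domination inequality nor the limsup ratio condition is actually used. Once this verification is complete, no new estimate is required and the corollary follows immediately from Theorem 1.1.
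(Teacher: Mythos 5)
Your proposal is correct and follows essentially the same route as the paper, whose entire justification for this corollary is the remark that when $\beta(t)=t$ hypotheses ($M_0$) and ($h_0$) are satisfied, so the result follows from Theorem~1.1. In fact your treatment of ($h_0$) is slightly more careful than the paper's: since the constant $\lambda\equiv 1$ is not strictly increasing and the limsup ratio condition amounts to a doubling-type constraint on $\mu$ that is not assumed, your observation that ($h_0$) enters only through Lemma~\ref{comp0}, whose conclusion is trivial for $\beta=\mathrm{id}$, is the right way to make the specialization airtight.
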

\begin{corollary}\label{th12}
Suppose that $f  \in \mathcal{PAA}(\mathbb{R},\mathbb{R},\mu)$ and that hypotheses {($h_1$)--($h_2$)}
and
 {($M_1$)-($M_2$)}
hold. Then the following equation
\begin{eqnarray}\label{X11}
  u^{\prime }(y) &=& au(y)+bu(-y)+f(y,u(y),u(-y)),\; y\in
\mathbb{R},
\end{eqnarray}
has a unique $\mu$-paa solution if and only if
\begin{equation*}
 \displaystyle\frac{|\lambda-a|+|\lambda+a|+2|b|}{\lambda^2}L_f
< 1.
\end{equation*}
\end{corollary}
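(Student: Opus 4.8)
The plan is to recognize that equation \eqref{X11} is exactly the special case of the general equation \eqref{X10} in which $\beta$ is the identity map and the kernel $K$ vanishes (equivalently, the two integral terms are absent, so that $c=0$ and the function $h$ plays no role). Accordingly I would reuse the integral representation of Aftabizadeh and Wiener \cite{1} recorded in \eqref{g}, but now with $g\equiv 0$, so that the fixed-point operator collapses to
\begin{align*}
\Gamma x(t)=&-\frac{1}{2\lambda}\Big[\exp(\lambda t)\int_t^\infty \exp(-\lambda y)\Big((\lambda-a)f(y,x(y),x(-y))+bf(-y,x(-y),x(y))\Big)dy\Big]\\
&+\frac{1}{2\lambda}\Big[\exp(-\lambda t)\int_{-\infty}^t \exp(\lambda y)\Big((\lambda+a)f(y,x(y),x(-y))-bf(-y,x(-y),x(y))\Big)dy\Big].
\end{align*}
Since $\beta=\mathrm{id}$, hypotheses {($M_0$)} and {($h_0$)} hold automatically (as already observed in the Epilogue), so the composition and reflection machinery of Section 2 applies without change.

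First I would verify that $\Gamma$ maps $\mathcal{PAA}(\mathbb{R},\mathbb{R},\mu)$ into itself. By Lemma \ref{comp2} the map $[y\mapsto f(y,x(y),x(-y))]$ is $\mu$-paa whenever $x$ is, and by Lemma \ref{opp} the same holds for its reflection $[y\mapsto f(-y,x(-y),x(y))]$. The exponentially weighted integrals then remain in $\mathcal{PAA}(\mathbb{R},\mathbb{R},\mu)$ by the argument used for $\theta_1,\theta_2$ in Lemma \ref{safa2}, the convergence of the improper integrals being ensured by the decaying factors $\exp(-\lambda(y-t))$ and $\exp(\lambda(y-t))$ (here $\lambda>0$ by {($h_1$)}). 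Theorem \ref{theorem2.8} guarantees that $(\mathcal{PAA}(\mathbb{R},\mathbb{R},\mu),\|\cdot\|_\infty)$ is a Banach space, which is the ambient space for the contraction argument.

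The core computation is the contraction estimate. Using the Lipschitz hypothesis {($h_2$)} I would bound, for $u,v\in\mathcal{PAA}(\mathbb{R},\mathbb{R},\mu)$,
\begin{equation*}
|f(y,v(y),v(-y))-f(y,u(y),u(-y))|\le 2L_f\|v-u\|_\infty,
\end{equation*}
and likewise for the reflected argument. Combining these with the elementary identities
\begin{equation*}
\exp(\lambda t)\int_t^\infty \exp(-\lambda y)\,dy=\frac{1}{\lambda},\qquad \exp(-\lambda t)\int_{-\infty}^t \exp(\lambda y)\,dy=\frac{1}{\lambda},
\end{equation*}
the first line of $\Gamma$ contributes the factor $(|\lambda-a|+|b|)L_f/\lambda^2$ and the second line the factor $(|\lambda+a|+|b|)L_f/\lambda^2$, so that
\begin{equation*}
|\Gamma v(t)-\Gamma u(t)|\le \frac{|\lambda-a|+|\lambda+a|+2|b|}{\lambda^2}\,L_f\,\|v-u\|_\infty.
\end{equation*}
Under the stated inequality this coefficient is strictly less than $1$, whence $\Gamma$ is a strict contraction; the Banach fixed point theorem then yields a unique fixed point, that is, a unique $\mu$-paa solution of \eqref{X11}, establishing the sufficiency direction exactly as in the proof of Theorem 1.1 (with $L_h$ and the kernel terms suppressed).

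I expect the genuine obstacle to be the necessity (``only if'') direction, since the fixed-point argument delivers only sufficiency. The natural strategy is to argue that, because $\Gamma$ is the exact solution operator of the leading linear part, the displayed quantity controls the Lipschitz modulus of $\Gamma$; for an extremal choice of $f$ (linear with Lipschitz constant $L_f$) the failure of the inequality would permit a nontrivial $\mu$-paa solution of the associated homogeneous problem, contradicting uniqueness. Exhibiting such a solution and making this implication fully rigorous, rather than merely asserting the reverse direction, is the delicate point; in the interest of brevity I would present it in parallel with the corresponding passage in the proof of Theorem 1.1.
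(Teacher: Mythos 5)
Your proposal is correct and takes essentially the same route as the paper: the paper obtains this corollary simply by specializing Theorem 1.1 to $\beta(t)=t$ (so that hypotheses ($M_0$) and ($h_0$) hold trivially) and suppressing the kernel terms ($K=0$, hence $c=0$ and $h$ plays no role), which is precisely the reduction of the operator \eqref{g} and the contraction estimate that you reproduce, with the same lemmas and the same constant $\frac{|\lambda-a|+|\lambda+a|+2|b|}{\lambda^2}L_f$. Concerning the ``only if'' direction that you flag as the genuine obstacle: the paper never proves it either --- its proofs of Theorems 1.1 and 1.2 establish sufficiency only, via the Banach fixed point theorem --- so what you have identified is a gap in the paper's ``if and only if'' claim itself, not a deficiency of your argument relative to the paper's own proof.
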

\begin{corollary}
\label{final21} Suppose that $f,h \in PAA(\mathbb{R}\times\mathbb{R}
^{2},\mathbb{R},\mu)$ and that
 hypotheses
{($h_1$),  ($h'_2$)--($h'_4$)}
and
{($M_1$)-($M_2$)} hold. Then equation \eqref{X11} has a unique
$\mu$-paa solution
 if and only if
$$\|L_f\|_{\mathcal{L}^p(\mathbb{R},\mathbb{R},dx)}+2(\int_0^{+\infty}(K(y))^q)^{\frac{1}{q}}\|L_h\|_{\mathcal{L}^p(\mathbb{R},\mathbb{R},dx)}<\displaystyle%
\frac{\lambda (q\lambda)^{\frac{1}{q}} }{|\lambda-a|+|\lambda+a|+2|b| }.$$
\end{corollary}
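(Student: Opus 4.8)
The plan is to read Corollary \ref{final21} off Theorem \ref{final2} by specializing $\beta=\mathrm{id}_{\mathbb{R}}$, so that the whole task reduces to checking that the two hypotheses $(M_0)$ and $(h_0)$ which Theorem \ref{final2} needs but the corollary does not list are automatically available when $\beta(t)=t$, and that equation \eqref{X10} then becomes exactly equation \eqref{X11}. Since Theorem \ref{final2} is stated as an equivalence, both the ``if'' and the ``only if'' directions transfer simultaneously, so no fixed-point argument has to be redone.

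First I would put $\beta(t)=t$ and verify $(M_0)$: the identity is continuous and strictly increasing, and for every $x\in\mathcal{AA}(\mathbb{R},\mathbb{R})$ one has $x\circ\beta=x\in\mathcal{AA}(\mathbb{R},\mathbb{R})$, so $(M_0)$ holds trivially. Next I would verify $(h_0)$. Because $\beta^{-1}(O)=O$ we get $\mu_{\beta}=\mu$, hence $d\mu_{\beta}=d\mu\le\lambda\,d\mu$ for any continuous, strictly increasing $\lambda\ge 1$; taking such a $\lambda$ that is in addition bounded, for instance $\lambda(t)=2+\tfrac{2}{\pi}\arctan t\in(1,3)$, keeps $S(T(r))=\lambda(2|r|)$ uniformly bounded, while the remaining factor $\mu[-T(r),T(r)]/\mu[-r,r]=\mu[-2|r|,2|r|]/\mu[-r,r]$ is finite under the standing assumptions on $\mu$, so the $\limsup$ in $(h_0)$ is finite. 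This is precisely the assertion already recorded in the Epilogue, that $(M_0)$ and $(h_0)$ hold when $\beta(t)=t$, which I would invoke directly.

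With $\beta(t)=t$ every argument $u(\beta(\pm y))$ in \eqref{X10} collapses to $u(\pm y)$ and every $u(\beta(\pm s))$ to $u(\pm s)$, so \eqref{X10} is identical to \eqref{X11}. Now all hypotheses of Theorem \ref{final2} are in force: $(h_0)$ was just checked and $(h_1)$ is assumed; $(h'_2)$--$(h'_4)$ are assumed; $(M_0)$ was just checked and $(M_1)$--$(M_2)$ are assumed. Applying Theorem \ref{final2} to \eqref{X11} yields a unique $\mu$-paa solution exactly when
$$\|L_f\|_{\mathcal{L}^p(\mathbb{R},\mathbb{R},dx)}+2\Big(\int_0^{+\infty}(K(y))^q\,dy\Big)^{\frac{1}{q}}\|L_h\|_{\mathcal{L}^p(\mathbb{R},\mathbb{R},dx)}<\frac{\lambda(q\lambda)^{\frac{1}{q}}}{|\lambda-a|+|\lambda+a|+2|b|},$$
which is the stated criterion.

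The argument is essentially a transfer of hypotheses, so I expect no serious obstacle. The one place asking for genuine care is the $\limsup$ clause of $(h_0)$: one must exhibit a single weight $\lambda$ that is simultaneously $\ge 1$ (to dominate $d\mu_{\beta}=d\mu$), strictly increasing, and slowly enough growing that $S(T(r))=\lambda(2|r|)$ does not spoil finiteness of the ratio. All the heavy analysis — the contraction estimate with the non-constant Lipschitz weights $L_f,L_h$ and the H\"older/Young treatment of the kernel $K$ — lives inside Theorem \ref{final2} and need not be repeated here.
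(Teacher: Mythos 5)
Your route is exactly the paper's: the Epilogue proves Corollary \ref{final21} in one sentence by asserting that for $\beta(t)=t$ hypotheses $(M_0)$ and $(h_0)$ are satisfied and then specializing Theorem \ref{final2}, which is precisely your transfer-of-hypotheses plan; your check of $(M_0)$ and the observation that \eqref{X10} collapses to \eqref{X11} are correct and match the source.

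There is, however, a genuine hole in your verification of the $\limsup$ clause of $(h_0)$ --- the very step you yourself flag as needing care. With $\beta=\mathrm{id}$ one has $T(r)=2|r|$, and since any admissible weight must satisfy $\lambda\ge 1$ (to dominate $d\mu_{\beta}=d\mu$), your bounded choice of $\lambda$ reduces the clause to $\limsup_{r\to\infty}\mu([-2r,2r])/\mu([-r,r])<\infty$. You assert this ratio ``is finite under the standing assumptions on $\mu$'', but $(M_1)$ and $(M_2)$ imply no doubling property: take $d\mu(t)=e^{|t|}\,dt$, which lies in $\mathcal{M}$, satisfies $(M_1)$ because $\mu(A+\tau)\le e^{|\tau|}\mu(A)$, and satisfies $(M_2)$ by symmetry, yet $\mu([-2r,2r])/\mu([-r,r])\sim e^{r}\to\infty$, so no choice of $\lambda\ge1$ can make the $\limsup$ finite. (The paper's Epilogue makes the same unsupported claim that $(h_0)$ holds whenever $\beta(t)=t$, so you have inherited a flaw from the source rather than introduced one.) The clean repair is to note that $(M_0)$ and $(h_0)$ enter the proof of Theorem \ref{final2} only through Lemma \ref{comp0}, i.e.\ only to guarantee that $v\in\mathcal{PAA}(\mathbb{R},\mathbb{R},\mu)$ implies $v\circ\beta\in\mathcal{PAA}(\mathbb{R},\mathbb{R},\mu)$; for $\beta=\mathrm{id}$ this is a tautology, so the contraction argument of Theorem \ref{final2} runs verbatim for \eqref{X11} without $(h_0)$ at all, and the corollary follows with no doubling assumption on $\mu$.
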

\begin{corollary}
\label{final22} Suppose that $f\in PAA(\mathbb{R}\times\mathbb{R}
^{2},\mathbb{R},\mu)$ and
that  hypotheses
{($h_1$),  ($h'_2$)},
and
{($M_1$)-($M_2$)} hold. Then equation \eqref{X3} has a unique
$\mu$-paa solution
 if and only if
$$\|L_f\|_{\mathcal{L}^p(\mathbb{R},\mathbb{R},dx)}<\displaystyle
\frac{\lambda (q\lambda)^{\frac{1}{q}} }{|\lambda-a|+|\lambda+a|+2|b| }.$$
\end{corollary}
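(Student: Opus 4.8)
The plan is to derive Corollary \ref{final22} as the specialization of Theorem \ref{final2} in which the kernel/integral terms are absent (equation \eqref{X3} is \eqref{X10} with $h\equiv 0$) and $\beta=\mathrm{id}$, and then to run the Banach fixed point argument of that theorem in this reduced setting. Because $\beta(t)=t$ is continuous, strictly increasing and maps $\mathcal{AA}(\mathbb{R},\mathbb{R})$ into itself, hypotheses {($M_0$)} and {($h_0$)} hold automatically, which is why they are not listed; and since there is no kernel, no hypothesis on $h$ or $K$ (i.e.\ {($h'_3$)}, {($h'_4$)}) is required. Concretely I would take the Aftabizadeh--Wiener representation \eqref{g} with $g\equiv 0$, so that the fixed-point operator collapses to
\begin{align*}
\Gamma x(t) =& -\frac{1}{2\lambda}\exp(\lambda t)\int_t^{\infty}\exp(-\lambda y)\Big((\lambda-a)f(y,x(y),x(-y))+bf(-y,x(-y),x(y))\Big)dy\\
&+\frac{1}{2\lambda}\exp(-\lambda t)\int_{-\infty}^{t}\exp(\lambda y)\Big((\lambda+a)f(y,x(y),x(-y))-bf(-y,x(-y),x(y))\Big)dy,
\end{align*}
whose fixed points are exactly the bounded solutions of \eqref{X3}.

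First I would verify that $\Gamma$ maps $\mathcal{PAA}(\mathbb{R},\mathbb{R},\mu)$ into itself. For $v\in\mathcal{PAA}(\mathbb{R},\mathbb{R},\mu)$ the map $[y\mapsto f(y,v(y),v(-y))]$ lies in $\mathcal{PAA}(\mathbb{R},\mathbb{R},\mu)$ by Lemma \ref{opp} (for the reflected argument $v(-y)$) together with the composition result for non-constant Lipschitz coefficients (Lemma \ref{comp3}, with {($h'_2$)} playing the role of {($h'_3$)}); since $\beta=\mathrm{id}$ no appeal to Lemma \ref{comp0} is needed. The two exponentially weighted resolvent integrals then preserve $\mathcal{PAA}$ by the same translation-invariance and dominated-convergence reasoning used in Lemma \ref{safa2} and Theorem \ref{theorem2.8}, with hypothesis {($h_1$)} ($P_1(\lambda,\mu),P_2(\lambda,\mu)<\infty$) guaranteeing that the $\mu$-ergodic component stays ergodic after the weighting.

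The decisive computation is the contraction estimate, which is where {($h'_2$)} and Hölder's inequality enter. For $u,v\in\mathcal{PAA}(\mathbb{R},\mathbb{R},\mu)$, hypothesis {($h'_2$)} gives $|f(t,v(t),v(-t))-f(t,u(t),u(-t))|\leq 2L_f(t)\|v-u\|_{\infty}$, where the factor $2$ will cancel against the prefactor $1/(2\lambda)$. I would bound each inner integral by Hölder with exponents $p,q$, the model estimate being
\begin{equation*}
\int_t^{\infty}\exp(-\lambda(y-t))L_f(y)\,dy\leq\Big(\int_t^{\infty}\exp(-q\lambda(y-t))\,dy\Big)^{1/q}\|L_f\|_{\mathcal{L}^p(\mathbb{R},\mathbb{R},dx)}=\frac{1}{(q\lambda)^{1/q}}\|L_f\|_{\mathcal{L}^p(\mathbb{R},\mathbb{R},dx)},
\end{equation*}
and symmetrically for the backward integral; the $b$-terms $f(-y,\cdot,\cdot)$ contribute the same bound by reflection-invariance of Lebesgue measure. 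Collecting the coefficients $(\lambda-a)$ and $b$ from the forward integral and $(\lambda+a)$ and $b$ from the backward integral yields
\begin{equation*}
|\Gamma v(t)-\Gamma u(t)|\leq\frac{|\lambda-a|+|\lambda+a|+2|b|}{\lambda(q\lambda)^{1/q}}\|L_f\|_{\mathcal{L}^p(\mathbb{R},\mathbb{R},dx)}\|v-u\|_{\infty}.
\end{equation*}
Under the stated inequality this Lipschitz constant is $<1$, so $\Gamma$ is a strict contraction on the Banach space $(\mathcal{PAA}(\mathbb{R},\mathbb{R},\mu),\|\cdot\|_{\infty})$ and the Banach fixed point theorem produces the unique $\mu$-paa solution; this settles the ``if'' direction.

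The part I expect to be genuinely delicate is the ``only if'' direction, that the inequality is necessary. The argument above only shows that the displayed quantity is an admissible contraction constant, not that it is optimal, so necessity does not follow formally. I would try to establish it by exhibiting, for a violating choice of parameters, a linear specialization such as $f(t,x,y)=L_f(t)(\alpha x+\gamma y)+f_0(t)$ for which the associated $\Gamma$ fails to be a contraction and loses existence or uniqueness of a bounded $\mu$-paa solution. The obstacle is precisely that the single application of Hölder is not tight in general, so one must argue that the constant $\frac{|\lambda-a|+|\lambda+a|+2|b|}{\lambda(q\lambda)^{1/q}}$ is actually attained in the extremal case, which requires a sharper two-sided analysis of the resolvent kernel of the reflected operator rather than the one-sided bound used for sufficiency.
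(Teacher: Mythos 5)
Your sufficiency argument is correct and follows essentially the paper's own route: the paper offers no separate proof of this corollary beyond the remark preceding it, namely that when $\beta(t)=t$ hypotheses {($M_0$)} and {($h_0$)} hold automatically, so the statement is just Theorem \ref{final2} specialized to $h\equiv 0$. Your reduced operator is exactly \eqref{g} with $g\equiv 0$, your model H\"older computation $\bigl(\int_t^{\infty}e^{-q\lambda(y-t)}dy\bigr)^{1/q}=(q\lambda)^{-1/q}$ reproduces the constant $\lambda(q\lambda)^{1/q}$, and your bookkeeping of the coefficients $|\lambda-a|$, $|\lambda+a|$, $2|b|$ (with the factor $2$ from {($h'_2$)} cancelling the prefactor $1/(2\lambda)$) agrees with the estimate in the proof of Theorem \ref{final2}; your use of Lemmas \ref{opp} and \ref{comp3} to check that $\Gamma$ preserves $\mathcal{PAA}(\mathbb{R},\mathbb{R},\mu)$ matches the paper's invocation of the Section 2 lemmas.

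Your suspicion about the ``only if'' direction is exactly right, and you should know that the paper never proves it either: the proofs of Theorems 1.1 and 1.2 establish only that the smallness condition makes $\Gamma$ a strict contraction, i.e.\ the ``if'' half, and the corollaries inherit this gap wholesale. Moreover, your planned extremal construction cannot succeed in general, because necessity is false as literally stated: $L_f$ in {($h'_2$)} is merely an upper Lipschitz bound, not a canonical quantity attached to $f$, so one may take $f(t,x,y)=f_0(t)$ independent of $(x,y)$ --- which satisfies {($h'_2$)} with any $L_f$, of arbitrarily large $\mathcal{L}^p$-norm --- while the resulting linear equation \eqref{X3} still has a unique bounded (hence unique $\mu$-paa) solution by the very representation formula you wrote down. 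So no sharpened two-sided analysis of the resolvent kernel will rescue the stated equivalence; the honest conclusion is the one-directional implication you actually proved, which is also all the paper's own argument delivers.
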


\bigskip
{\bf Acknowledgements.}
This research was supported by the Slovenian Research Agency grants P1-0292, N1-0114, N1-0083, N1-0064, and J1-8131.

\end{document}